\documentclass[a4paper,11pt,twoside]{amsart}
\usepackage[utf8]{inputenc}
\usepackage{amssymb,amsmath,amsthm,amscd}
\usepackage{mathrsfs}
\usepackage{bbm}
\usepackage{enumerate}
\usepackage{fullpage}
\usepackage{comment}

\linespread{1.1}

\usepackage{lmodern}

\newcommand{\C}{\ensuremath{\mathbb{C}}}

\newcommand{\cC}{\mathcal{C}}
\newcommand{\cH}{\mathcal{H}}

\newcommand{\cO}{\mathcal{O}}
\newcommand{\fg}{\mathfrak g}
\newcommand{\fh}{\mathfrak h}

\newcommand{\id}{\mathord{\text{\rm id}}}

\newcommand{\al}{\alpha}
\newcommand{\ot}{\otimes}

\newcommand{\uniC}{C_u(\mathcal C)}

\newcommand{\1}{\mathbbm{1}}
\newcommand{\tube}{\mathcal{AC}}
\newcommand{\cA}{\mathcal{A}}
\newcommand{\be}{\beta}
\newcommand{\ga}{\gamma}

\DeclareMathOperator{\Rep}{Rep}
\DeclareMathOperator{\Irr}{Irr}
\DeclareMathOperator{\Mor}{Mor}
\DeclareMathOperator{\End}{End}

\DeclareMathOperator{\mult}{mult}

\theoremstyle{definition}
\newtheorem{theorem}{Theorem}[section]
\newtheorem{theoremA}{Theorem}

\newtheorem*{corollaryB}{Corollary B}
\newtheorem{definition}[theorem]{Definition}
\newtheorem{lemma}[theorem]{Lemma}
\newtheorem{proposition}[theorem]{Proposition}
\newtheorem{corollary}[theorem]{Corollary}
\newtheorem{remark}[theorem]{Remark}

\author{Yuki Arano}
\thanks{YA was supported by a fellowship of the Japan Society for the Promotion of Science and the Program for Leading Graduate Schools, MEXT, Japan}
\address{Yuki Arano
\newline Department of Mathematics, Kyoto university
\newline Kitashirakawa-Oiwakecho, Sakyo-ku, Kyoto, 606-8502, Japan}
\email{y.arano@math.kyoto-u.ac.jp}

\author{Tim de Laat}
\thanks{TdL was supported by the Research Foundation -- Flanders (FWO) through a Postdoctoral Fellowship and by the Deutsche Forschungsgemeinschaft (SFB 878)}
\address{Tim de Laat
\newline Mathematisches Institut, Westf\"alische Wilhelms-Universit\"at M\"unster
\newline Einsteinstrasse 62, 48149, Germany}
\email{tim.delaat@uni-muenster.de}

\author{Jonas Wahl}
\thanks{JW was supported by European Research Council Consolidator Grant 614195}
\address{Jonas Wahl
\newline KU Leuven, Department of Mathematics
\newline Celestijnenlaan 200B -- Box 2400, B-3001 Leuven, Belgium}
\email{jonas.wahl@kuleuven.be}

\title{Howe-Moore type theorems for quantum groups and rigid $C^{\ast}$-tensor categories}

\begin{document}

\begin{abstract}
  We formulate and study Howe-Moore type properties in the setting of quantum groups and in the setting of rigid $C^{\ast}$-tensor categories. We say that a rigid $C^{\ast}$-tensor category $\mathcal{C}$ has the Howe-Moore property if every completely positive multiplier on $\mathcal{C}$ has a limit at infinity. We prove that the representation categories of $q$-deformations of connected compact simple Lie groups with trivial center satisfy the Howe-Moore property. As an immediate consequence, we deduce the Howe-Moore property for Temperley-Lieb-Jones standard invariants with principal graph $A_{\infty}$. These results form a special case of a more general result on the convergence of completely bounded multipliers on the aforementioned categories. This more general result also holds for the representation categories of the free orthogonal quantum groups and for the Kazhdan-Wenzl categories. Additionally, in the specific case of the quantum groups $\mathrm{SU}_q(N)$, we are able, using a result of the first-named author, to give an explicit characterization of the central states on the quantum coordinate algebra of $\mathrm{SU}_q(N)$, which coincide with the completely positive multipliers on the representation category of $\mathrm{SU}_q(N)$.
\end{abstract}

\maketitle

\section{Introduction} \label{sec:introduction}
A locally compact group has the Howe-Moore property if for every unitary representation without invariant vectors, the matrix coefficients of the representation vanish at infinity. This property was established for connected non-compact simple Lie groups with finite center by Howe and Moore \cite{howemoore} and Zimmer \cite{zimmer}. Howe and Moore also showed the property for certain subgroups of simple algebraic groups over non-Archimedean local fields. Other important examples of groups with the Howe-Moore property are given in \cite{lubotzkymozes} and \cite{burgermozes}. The Howe-Moore property plays an essential role in the proofs of several rigidity results, most notably in Margulis's superrigidity theorem and Mostow's rigidity theorem. More generally, the Howe-Moore property has important applications to ergodicity and strong mixingness of actions and flows. For an overview on and a unified approach to locally compact groups with the Howe-Moore property, we refer to \cite{ciobotaru} and \cite{badergelander}.

For Lie groups and algebraic groups over non-Archimedean local fields, we know much more than the Howe-Moore property about the asymptotic behaviour of matrix coefficients of representations. Recall that for every locally compact group $G$, the space $\mathrm{WAP}(G)$ of weakly almost periodic functions on $G$ admits a unique invariant mean. By a result of Veech \cite{veech} (see also \cite{ellisnerurkar}), it is known that every weakly almost periodic function on a connected non-compact simple Lie group with finite center has a limit at infinity, and this limit is equal to the mean of the function. As a consequence, it follows that a very strong version of the Howe-Moore property holds for such groups: for every uniformly bounded representation on a reflexive Banach space without invariant vectors, the matrix coefficients of the representation vanish at infinity. Recently, Bader and Gelander presented an abstract and general framework for such Howe-Moore type phenomena \cite{badergelander}, and in particular they explain that Veech's result also holds in the setting of connected simple algebraic groups over non-Archimedean local fields.\\

In this article, we initiate the study of Howe-Moore type phenomena in the setting of quantum groups and in the setting of rigid $C^{\ast}$-tensor categories. Unitary representations for quantum groups have been studied extensively, and recently, Popa and Vaes developed a unitary representation theory for ``subfactor related group-like objects'' in the setting of rigid $C^{\ast}$-tensor categories \cite{popavaes} (see also \cite{neshveyevyamashita}). Our main result is a Howe-Moore type theorem for the representation categories of $q$-deformations of compact simple Lie groups, which are ubiquitous and historically motivating examples of compact quantum groups. The theory of compact quantum groups was initiated by Woronowicz \cite{woronowicz}, and we refer to \cite{neshveyevtuset} for a thorough account of the theory. Recall that the representation category of a compact quantum group $\mathbb{G}$ is the category of finite-dimensional unitary representations of $\mathbb{G}$. It is a rigid $C^{\ast}$-tensor category, and, very strikingly, in the presence of a well-behaved functor into the category of finite-dimensional Hilbert spaces, there is a way to realize a rigid $C^{\ast}$-tensor category as the representation category of a certain compact quantum group (see e.g.~\cite{neshveyevtuset}).\\

Let $\mathcal{C}$ be a rigid $C^{\ast}$-tensor category, and let $\mathrm{Irr}(\mathcal{C})$ denote the set of equivalence classes of irreducible objects in $\mathcal{C}$. Then $\mathcal{C}$ is said to have the Howe-Moore property if every completely positive multiplier $\omega : {\rm{Irr}}({\mathcal{C}}) \to \mathbb{C}$ has a limit at infinity (see Definition \ref{dfn:hmtc}).
\begin{theoremA} \label{thm:hm}
	Let $q \in (0,1]$, and let $K_q$ be a $q$-deformation of a connected compact simple Lie group $K$ with trivial center. Then every completely bounded multiplier on $\Rep(K_q)$ has a limit at infinity. In particular, the representation category $\mathrm{Rep}(K_q)$ has the Howe-Moore property.
\end{theoremA}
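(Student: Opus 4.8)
The plan is to represent every completely bounded multiplier on $\Rep(K_q)$ as an integral against a (possibly complex) measure on a compact space of ``classical points'', and then to extract the limit at infinity from the asymptotics of the normalized quantum characters. Throughout I identify $\Irr(\Rep(K_q))$ with the monoid $P^{+}$ of dominant weights indexing the irreducibles, and ``$\lambda \to \infty$'' means that $\lambda$ eventually leaves every finite subset of $P^{+}$.

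First I would invoke the correspondence, in the spirit of Popa--Vaes, between multipliers on $\Rep(K_q)$ and functionals on the associated tube algebra, equivalently on the central part of the coordinate algebra $\C[K_q]$; under this correspondence a completely positive multiplier $\omega$ is exactly a central state $\phi$ on $\C[K_q]$, with $\omega(\lambda) = \phi(\chi_{\lambda})/\dim_q(\lambda)$. The structural input I would lean on here is that this central algebra is commutative---reflecting the commutativity of the fusion rules of $K$---so that its completely positive multipliers are the states of some $C(X)$ with $X$ a compact space whose extreme points are the normalized characters $f_x(\lambda) := \chi_{\lambda}(x)/\dim_q(\lambda)$, $x \in X$. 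For $q = 1$ one has $X = T/W$, the space of conjugacy classes of $K$; in the genuinely quantum regime $q < 1$ the identification of $X$ and of the characters $f_x$ is where I would use the first-named author's analysis of the Drinfeld double and of central states. Consequently every completely positive $\omega$ is represented as $\omega(\lambda) = \int_X f_x(\lambda)\, d\mu(x)$ for a probability measure $\mu$ on $X$, and, passing to linear combinations, every completely bounded multiplier admits such a representation with $\mu$ a complex Radon measure of total variation at most $\|\omega\|_{\mathrm{cb}}$.

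The analytic heart is the behaviour of the normalized characters: I would show that $|f_x(\lambda)| \le 1$ for all $x$ and $\lambda$, and, crucially, that $f_x(\lambda) \to 0$ as $\lambda \to \infty$ for every $x \neq x_0$, where $x_0$ denotes the class of the identity. This is exactly the step where the triviality of $Z(K)$ is indispensable: a point $x$ arising from a nontrivial central element would satisfy $|f_x(\lambda)| = 1$ for all $\lambda$, with an oscillating central-character phase---this is the mechanism behind the non-convergent multiplier $(-1)^{n}$ on $\Rep(\mathrm{SU}(2))$---so triviality of the center removes all such points and leaves $x_0$ as the only point at which the normalized characters fail to vanish. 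For $q < 1$ the exponential growth of $\dim_q$ makes this decay robust; for $q = 1$ it is the genuine content, and I would derive it from the Weyl character formula, where for regular $x$ the numerator stays bounded while $\dim\lambda \to \infty$, and the singular non-central strata are handled through the centralizer.

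With these ingredients the conclusion is immediate. Since $P^{+}$ is countable it suffices to test convergence along sequences $\lambda_k \to \infty$, and dominated convergence---the integrands are bounded by $1$ and converge pointwise to $\1_{\{x_0\}}$---gives $\omega(\lambda_k) \to \mu(\{x_0\})$, a value independent of the chosen sequence. Hence every completely bounded multiplier has a limit at infinity; when $\omega$ is completely positive the measure $\mu$ is positive and the limit $\mu(\{x_0\}) \ge 0$ is precisely its mean, yielding the Howe--Moore property. I expect the main obstacle to be twofold: pinning down the spectrum $X$ and the integral representation in the quantum regime $q < 1$ (via the first-named author's results), and, for $q = 1$, controlling the decay of the normalized characters uniformly at \emph{singular} non-central torus elements and along weight sequences that go to infinity only within a wall, where the crude Weyl-denominator bound degenerates and one must exploit the finer cancellation in the character formula.
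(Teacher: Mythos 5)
Your skeleton---an integral representation of the multiplier against a measure on a compact space of evaluation points, decay of normalized characters away from the identity class, then dominated convergence---is exactly the strategy of the paper's Theorem \ref{thm:generalhm}. However, two of your steps have genuine gaps, and they are precisely the points where the paper does something different. The first is your treatment of completely \emph{bounded} multipliers: you claim that ``passing to linear combinations'' of cp multipliers yields, for every cb multiplier, a complex measure of total variation at most $\|\omega\|_{\mathrm{cb}}$. This is not available. A cb multiplier need not be a linear combination of cp multipliers (the group analogue is the strict inclusion of the Fourier--Stieltjes algebra $B(G)$ into the algebra of completely bounded Herz--Schur multipliers for nonamenable $G$), and $\Rep(K_q)$ with $q<1$ is not amenable---indeed its Drinfel'd double even has central property (T) by work of the first-named author, so cp multipliers are scarce. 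The paper never decomposes: a cb multiplier induces a \emph{normal} cb map on the von Neumann completion $\tube''$ of the tube algebra (the regular representation), this restricts to the fusion-algebra corner, which---since only the fusion \emph{ring} of $K$ enters---is the classical character algebra $C(T/W)$, and composing with evaluation at the neutral element gives a bounded functional on $C(T/W)$, whence a complex measure by Riesz representation.

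The second gap is your choice of normalization. By dividing by the quantum dimension and identifying extreme cp multipliers with extreme central states, your space $X$ for $q<1$ becomes the spherical unitary dual of the Drinfel'd double, not $T/W$. That creates two problems: (i) this dual is explicitly determined only for $\mathrm{SU}_q(N)$ (this is the content of Section \ref{sec:thmb}; Theorem \ref{thm:arano}(iii) is special to $\mathrm{SU}(N)$), so your route cannot cover general adjoint $K$, which Theorem \ref{thm:hm} requires; and (ii) $X$ contains complementary-series points, at which decay of the normalized quantum characters does \emph{not} follow from ``exponential growth of $\dim_q$''---along sequences $\lambda\to\infty$ inside a wall the dominant exponentials can cancel, which is exactly the degeneration you flag at the end. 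The paper's normalization by the \emph{classical} dimension, via the regular representation of the fusion algebra, puts the measure on $T/W$ for every $q$, so the only analytic input is the purely classical Proposition \ref{prop:characterconvergence}: $\chi_\lambda(t)/\dim V(\lambda)\to 0$ for every $t\in T\setminus Z(K)$, including \emph{singular} $t$, which is handled by passing to the stabilizer root subsystem $\Delta^0$ in the Weyl character formula and invoking Lemma \ref{lemma:subrootsystem} to guarantee that some factor $(\lambda+\rho,\alpha)$ with $\alpha$ outside the relevant subsystem blows up. That lemma-level argument is the step you yourself name as the ``main obstacle'' but do not supply; without it, and without repairing the two structural issues above, the proof is incomplete.
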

Another rich source of rigid $C^*$-tensor categories is Jones's theory of subfactors. Every inclusion $N \subset M$ of II$_1$-factors with finite index $[M:N] < \infty$ can be enhanced to its Jones tower $M_{-1} \subset M_0 \subset M_1 \subset M_2 \subset \ldots$ of II$_1$-factors, where $M_{-1}=N$ and $M_0=M$, satisfying $[M_{i+1}: M_i] = [M:N]$ for all $i \geq 0$ and having the property that every $M_{i+1}$ is generated by $M_i$ and a projection $e_i$ commuting with $M_{i-1}$ (see \cite{jones1} for details). The projection $e_i$ is commonly referred to as the $i$-th Jones projection. The relative commutants $M_i^{\prime} \cap M_j$, with $i \leq j$, of algebras in the Jones tower form a lattice of finite-dimensional $C^{\ast}$-algebras. This lattice is called the standard invariant of $N \subset M$ and has played a fundamental role in many aspects of subfactor theory, such as the classification of subfactors of small index (see e.g.~\cite{jonesmorrisonsnyder}). Standard invariants have been abstractly characterized in several different ways, most notably as $\lambda$-lattices by Popa in \cite{popa2} and, more diagrammatically, as subfactor planar algebras by Jones \cite{jones2}. It is a striking result due to Popa (see \cite{popa2}) that every abstract $\lambda$-lattice (and hence every subfactor planar algebra) can be realized as a concrete one coming from a subfactor $N \subset M$ of index $[M:N] = \lambda^{-1}$.

A central example of a standard invariant is the Temperley-Lieb-Jones standard invariant $\rm{TLJ}(\lambda)$. It embeds into any other standard invariant coming from a subfactor $N \subset M$ with index $[M:N] = \lambda^{-1}$, so it can be viewed as an initial object for the category of standard invariants. With the Jones tower of $N \subset M$, we can naturally associate a rigid $C^{\ast}$-tensor category $\mathcal{C}_M$ consisting of all $M$-bimodules that are isomorphic to a finite direct sum of $M$-subbimodules of ${}_M L^2(M_i)_M$, where $i \geq 0$. The tensor operation, or fusion, in this category is the Connes tensor product over $M$. It is known that the category $\mathcal{C}_M$ is equivalent, as a rigid $C^{\ast}$-tensor category, to the representation category of the compact quantum group $\mathrm{PSU}_q(2)$, where $q$ is the unique number $0 < q \leq 1$ such that $q + \frac{1}{q} = \lambda^{-\frac{1}{2}}$ (see e.g.~\cite{popavaes}). The following result is therefore an immediate consequence of Theorem \ref{thm:hm}.
\begin{corollaryB}
Let $N \subset M$ be an inclusion of II$_1$ factors with index $[M:N] = \lambda^{-1} \geq 4$ and Temperley-Lieb-Jones standard invariant $\rm{TLJ}(\lambda)$ (and hence principal graph $A_{\infty}$). The rigid $C^{\ast}$-tensor category $\cC_M$ of $M$-bimodules associated with the Jones tower of $N \subset M$ has the Howe-Moore property.
\end{corollaryB}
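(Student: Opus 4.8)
The plan is to deduce this directly from Theorem \ref{thm:hm} via the categorical equivalence recorded just above the statement, so that the entire argument reduces to (i) checking that the hypotheses of Theorem \ref{thm:hm} are met by $\mathrm{PSU}_q(2)$ and (ii) verifying that the Howe-Moore property is preserved under equivalence of rigid $C^{\ast}$-tensor categories. For step (i), I would first recall that $\mathrm{PSU}(2) = \mathrm{SU}(2)/\{\pm 1\} \cong \mathrm{SO}(3)$ is a connected compact simple Lie group with trivial center, so that its $q$-deformation $\mathrm{PSU}_q(2)$ is exactly a $q$-deformation $K_q$ of the kind appearing in Theorem \ref{thm:hm}. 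The index constraint $[M:N] = \lambda^{-1} \geq 4$ translates, through the relation $q + q^{-1} = \lambda^{-1/2}$, into $q + q^{-1} \geq 2$, which for $q > 0$ is equivalent to $q \in (0,1]$ (with $q = 1$ precisely when $\lambda^{-1} = 4$); this is exactly the parameter range covered by Theorem \ref{thm:hm}.

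For step (ii), the key observation is that the Howe-Moore property of Definition \ref{dfn:hmtc} is an \emph{invariant} of the rigid $C^{\ast}$-tensor category: a completely positive multiplier $\omega \colon \Irr(\cC) \to \C$, together with the notion of its limit at infinity, is defined purely in terms of the fusion data of $\cC$ (the set $\Irr(\cC)$, the fusion rules, and the associated dimension function). An equivalence of rigid $C^{\ast}$-tensor categories therefore induces a bijection between the completely positive (respectively completely bounded) multipliers on the two categories that is compatible with the relevant notion of convergence at infinity. Since the excerpt records that $\cC_M$ is equivalent, as a rigid $C^{\ast}$-tensor category, to $\Rep(\mathrm{PSU}_q(2))$, this bijection carries the conclusion of Theorem \ref{thm:hm} for $\Rep(\mathrm{PSU}_q(2))$ over to $\cC_M$ verbatim.

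Combining these two steps yields the corollary: every completely bounded, and in particular every completely positive, multiplier on $\cC_M$ has a limit at infinity, so $\cC_M$ has the Howe-Moore property. I do not expect a genuine obstacle here, since the result is an immediate consequence of Theorem \ref{thm:hm}; the only mild point requiring care is to make the transport of structure under the categorical equivalence explicit, in particular to confirm that the equivalence respects the indexing of $\Irr$ by the principal graph $A_{\infty}$ so that ``at infinity'' refers to the same thing on both sides. This is automatic, however, because an equivalence of rigid $C^{\ast}$-tensor categories preserves the fusion ring and hence the principal graph.
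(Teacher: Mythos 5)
Your proposal is correct and takes essentially the same route as the paper, which likewise cites the equivalence $\cC_M \simeq \Rep(\mathrm{PSU}_q(2))$ (with $q \in (0,1]$ determined by $q + q^{-1} = \lambda^{-1/2}$) and deduces Corollary B as an immediate consequence of Theorem \ref{thm:hm}, using that $\mathrm{PSU}(2) \cong \mathrm{SO}(3)$ is connected, compact, simple with trivial center. Two small wording fixes: completely positive multipliers are \emph{not} determined by fusion data alone (they are families of maps on the algebras $\End(\alpha \ot \beta)$, so categories with isomorphic fusion rings can have different cp-multiplier cones) --- what you actually need, and what is true, is that a unitary monoidal equivalence transports the multiplier structure and the bijection of $\Irr$; and $q + q^{-1} \geq 2$ holds for \emph{all} $q > 0$, the correct point being that $q \mapsto q + q^{-1}$ is a bijection from $(0,1]$ onto $[2,\infty)$, so the index condition $\lambda^{-1} \geq 4$ guarantees a unique solution $q \in (0,1]$.
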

Theorem \ref{thm:hm} (and hence Corollary B) follows from a more general result (see Theorem \ref{thm:generalhm}) on the convergence of completely bounded multipliers on certain rigid $C^{\ast}$-tensor categories. This more general result also holds for the representation categories of the free orthogonal quantum groups and for the Kazhdan-Wenzl categories. We refer to Section \ref{sec:thma} for the details.

Our Howe-Moore type results are the first such phenomena beyond the setting of locally compact groups. They may be useful for the study of quantum group actions. More specifically, the central Howe-Moore property (see Section \ref{sec:thma}) may be of use in the setting of actions of discrete quantum groups on certain operator algebras (cf.~\cite{dawsskalskiviselter}). More generally, one may apply the Howe-Moore property directly in the setting of actions of rigid $C^{\ast}$-tensor categories, which are just tensor functors from the tensor category to the category of bimodules over a von Neumann algebra (cf.~\cite{popa1}, \cite{hayashiyamagami}, \cite{masuda}).

For the proof of Theorem \ref{thm:generalhm} (and hence of Theorem \ref{thm:hm}), we relate the character algebra of $K_q$ to the character algebra of $K$, which can in turn be identified with the algebra of continuous functions on $T/W$, where $T$ is a maximal torus and $W$ is the associated Weyl group. A crucial ingredient of our proof is a certain general asymptotic behaviour of the characters of highest weight representations of compact Lie groups (see Proposition \ref{prop:characterconvergence}).\\

The Howe-Moore property for rigid $C^{\ast}$-tensor categories describes certain asymptotic behaviour of completely positive multipliers. The family of completely positive multipliers on the category $\mathrm{Rep}(\mathbb{G})$ (which is more restrictive than the family of completely bounded multipliers) coincides with the central states on the quantum coordinate algebra $\mathcal{O}(\mathbb{G})$, which were already investigated in \cite{decommerfreslonyamashita}. For the most well-known examples of $q$-deformations, namely the quantum groups $\mathrm{SU}_q(N)$, where $N \geq 2$, it turns out that we can find an explicit characterization of the central states. To this end, we use recent work of the first-named author \cite{arano2}, in which the unitary representation theory of the Drinfel'd double of a $q$-deformation of a compact Lie group is compared with the unitary representation theory of the complexification of the compact Lie group. Indeed, we are able to relate the central states very concretely to $\mathrm{SU}(N)$-bi-invariant positive-definite functions on $\mathrm{SL}(N,\mathbb{C})$. Below, let $Q$ denote the root lattice and $P$ the weight lattice with respect to a fixed Cartan subalgebra (see Section \ref{subsec:qdeformations}). Recall that a central state on $\mathcal{O}(\mathrm{SU}_q(N))$ is uniquely determined by its restriction to a chosen set $P_{+}$ of positive weights. Hence, we can identify the central states with maps $\varphi : P_{+} \to \mathbb{C}$.
\setcounter{theoremA}{2}
\begin{theoremA} \label{thm:cssuqn}
	For every central state $\varphi: P_+ \to {\mathbb C}$ on $\mathrm{SU}_q(N)$, there exist $\mathrm{SU}(N)$-bi-invariant positive-definite functions $\varphi_\chi$ on $\mathrm{SL}(N,\mathbb{C})$ such that
	\begin{align*}
	  \varphi(\lambda) &= \sum_{\chi \in \widehat{P/Q}} \chi(\lambda) \varphi_\chi(q^{2\lambda + 2\rho}), \; \textrm{and}\\
	  \| \varphi \| &\leq \sum_{\chi \in \widehat{P/Q}} \|\varphi_\chi \| \leq C \| \varphi \|,
	\end{align*}
	where $C$ is a constant (not depending on $\varphi$) and $\rho$ is half of the sum of the positive roots.
\end{theoremA}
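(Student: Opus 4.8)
The plan is to use the first-named author's comparison \cite{arano2} between the unitary representation theory of the Drinfel'd double $D(\mathrm{SU}_q(N))$ and that of the complexification $\mathrm{SL}(N,\mathbb{C})$ of $\mathrm{SU}(N)$. As recalled above, a central state on $\mathcal{O}(\mathrm{SU}_q(N))$ is the same datum as a completely positive multiplier on $\Rep(\mathrm{SU}_q(N))$, and hence as a positive-definite function on the fusion algebra $\mathbb{C}[P_+]$. The first step is to reinterpret such a positive-definite function, via the GNS construction, as an admissible unitary representation of $D(\mathrm{SU}_q(N))$ equipped with a distinguished invariant vector, the centrality of $\varphi$ being exactly what produces this invariance.

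The second step transports this picture to the classical side. By \cite{arano2}, the admissible representation of $D(\mathrm{SU}_q(N))$ corresponds to a unitary representation $\pi$ of $\mathrm{SL}(N,\mathbb{C})$, and the invariant vector coming from centrality corresponds to an $\mathrm{SU}(N)$-fixed (spherical) vector $\xi$. The matrix coefficient $g \mapsto \langle \pi(g)\xi, \xi\rangle$ is then automatically an $\mathrm{SU}(N)$-bi-invariant positive-definite function on $\mathrm{SL}(N,\mathbb{C})$, which, by the Cartan decomposition $\mathrm{SL}(N,\mathbb{C}) = \mathrm{SU}(N)\,A\,\mathrm{SU}(N)$, is determined by its restriction to the positive chamber of the split torus $A$.

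Two features remain to be incorporated. First, tracking the weight data through \cite{arano2} shows that the irreducible label $\lambda \in P_+$ is evaluated at the point $q^{2\lambda + 2\rho}$ of the positive chamber of $A$, the shift by $2\rho$ arising from the modular element $q^{2\rho}$ of $\mathrm{SU}_q(N)$. Second, the fusion algebra of $\mathrm{SU}_q(N)$ is graded by the center $P/Q \cong Z(\mathrm{SU}(N))$ via $\lambda \mapsto \lambda \bmod Q$; Fourier-decomposing $\varphi$ over the dual group $\widehat{P/Q}$ and matching each homogeneous piece with a spherical function on the complexification produces the bi-invariant positive-definite functions $\varphi_\chi$ together with the identity $\varphi(\lambda) = \sum_{\chi \in \widehat{P/Q}} \chi(\lambda)\,\varphi_\chi(q^{2\lambda + 2\rho})$. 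Since $Z(\mathrm{SL}(N,\mathbb{C})) \subset \mathrm{SU}(N)$, bi-invariance automatically forces each $\varphi_\chi$ to descend to the adjoint complexification, so there is no ambiguity in the choice of group.

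The norm estimates then follow from this decomposition: the lower bound $\|\varphi\| \le \sum_{\chi} \|\varphi_\chi\|$ is immediate from the triangle inequality, while for the upper bound one observes that each $\varphi_\chi$ is obtained from $\varphi$ by averaging over the finite abelian group $P/Q$ and is therefore bounded by $\|\varphi\|$; summing over the $|P/Q| = N$ characters gives a constant $C$ depending only on $N$. The main obstacle I expect lies in the third step: one must make the correspondence of \cite{arano2} explicit enough on matrix coefficients to pin down the evaluation point $q^{2\lambda + 2\rho}$ exactly, and to confirm that centrality matches genuine $\mathrm{SU}(N)$-bi-invariance rather than a twisted variant, all while checking that the identification of positive-definite functions survives the functional-analytic completions involved.
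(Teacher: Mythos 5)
Your high-level ingredients are the right ones (disintegration ideas, the role of $q^{2\lambda+2\rho}$, the group $\widehat{P/Q}$), but two of your concrete mechanisms do not work, and they are exactly where the content of the theorem lies. First, the transfer to $\mathrm{SL}(N,\mathbb{C})$ in your second step overstates what \cite{arano2} provides: there is no representation-level correspondence sending an arbitrary admissible representation of the double with invariant vector to a unitary representation of $\mathrm{SL}(N,\mathbb{C})$ with a spherical vector. The comparison is a comparison of \emph{spherical parameters}: the spherical dual of the double lives in $\fh^*/2\pi i \log(q)^{-1}Q^\vee \rtimes W$, positive-definiteness of $\varphi^\nu_q$ is equivalent to that of the classical $\varphi^\nu_1$ only for \emph{almost real} $\nu$, and for $\mathrm{SU}(N)$ a general parameter becomes almost real only after subtracting some $\chi \in 2\pi i \log(q)^{-1}P^\vee$ --- this shift is precisely what creates the factor $\chi(\lambda)$. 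Consequently the paper's proof must first disintegrate $\varphi = \int \varphi^\nu_q\, d\mu(\nu)$ against a finite measure on the spherical dual $Z_q$, \emph{partition the parameter space} $Z_q = \coprod_\chi Z_q^\chi$ according to the required shift, and then convert each extremal piece using the explicit identity $\varphi^\nu_q(\lambda) = \varphi^\nu_1(q^{2\lambda+2\rho})/\varphi^\nu_1(q^{2\rho})$ (proved from the symmetry $A_\nu(q^\mu)=A_\mu(q^\nu)$ of the Weyl-type sums --- this step, which you flag as the main obstacle, is actually the easy part).

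Second, and more seriously, your norm estimate is based on a Fourier-averaging argument that is invalid: the terms $\chi(\lambda)\varphi_\chi(q^{2\lambda+2\rho})$ are \emph{not} homogeneous components of $\varphi$ for the $P/Q$-grading of the fusion algebra, since each term depends on $\lambda$ through $\varphi_\chi(q^{2\lambda+2\rho})$ and not merely through $\lambda \bmod Q$. Hence $\varphi_\chi$ cannot be recovered from $\varphi$ by averaging over $P/Q$, the bound $\|\varphi_\chi\| \leq \|\varphi\|$ has no justification, and the constant is not $|P/Q| = N$. In the paper the pieces arise as $\varphi_\chi = \int \varphi^\nu_1 \, \frac{d\mu_\chi(\nu)}{\varphi^\nu_1(q^{2\rho})}$ from the restricted measures $\mu_\chi$, so $\sum_\chi \|\varphi_\chi\| \leq C\|\varphi\|$ with $C = \sup\bigl\{1/\varphi^1_{\nu - \chi}(q^{2\rho})\bigr\}$, and the analytic core of the proof is showing this supremum is finite and positive: one must locate the zero set of $\nu \mapsto \varphi^\nu_1(q^{2\rho})$ (via the Weyl denominator formula), prove strict positivity of $\varphi^\nu_1(q^{2\rho})$ for almost real self-adjoint parameters by a connectedness/homotopy argument in the parameter space, and deduce boundedness on each $Z_q^\chi$. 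None of this denominator control appears in your proposal --- indeed the normalizing factor $1/\varphi^\nu_1(q^{2\rho})$ itself is absent --- so the second inequality of the theorem is unproved as written. (Your lower bound $\|\varphi\| \leq \sum_\chi \|\varphi_\chi\|$ is fine: evaluate at $\lambda = 0$ and use that positive-definite functions attain their norm at the identity.)
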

This explicit characterization of central states can be useful for the study of the analytic properties of the quantum groups $\mathrm{SU}_q(N)$ that involve central states.

The article is organized as follows. We recall some preliminaries in Section \ref{sec:preliminaries}. Theorem \ref{thm:hm} is proven in Section \ref{sec:thma} and Theorem \ref{thm:cssuqn} in Section \ref{sec:thmb}.

\section*{Acknowledgements}
The authors thank Stefaan Vaes for valuable discussions, suggestions and remarks. The first-named author wishes to thank KU Leuven, where this research was carried out, for the invitation and hospitality.

\section{Preliminaries} \label{sec:preliminaries}

\subsection{Rigid $C^{\ast}$-tensor categories} \label{subsec:cstc}
A $C^{\ast}$-tensor category is a category that behaves similar to the category of Hilbert spaces. For the basic theory of $C^{\ast}$-tensor categories and the facts mentioned in this subsection, we refer to \cite[Chapter 2]{neshveyevtuset}.

In what follows, all tensor categories will be assumed to be strict, unless explicitly mentioned otherwise. This is not a fundamental restriction, since every tensor category can be strictified.

Let $\mathcal{C}$ be a $C^{\ast}$-tensor category. An object $\bar{u}$ in $\mathcal{C}$ is conjugate to an object $u$ in $\mathcal{C}$ if there are $R \in \Mor(\1,  \bar{u} \ot u)$ and $\bar{R} \in \Mor(\1, u \ot \bar{u})$ such that
\[
  u \xrightarrow{1 \ot R} u \ot \bar{u} \ot u \xrightarrow{\bar{R}^* \ot 1 } u \ \ \text{and} \ \ \bar{u} \xrightarrow{1 \ot \bar{R}} \bar{u} \ot u \ot \bar{u} \xrightarrow{R^* \ot 1} \bar{u}
\]
are the identity morphisms. Conjugate objects are uniquely determined up to isomorphism. If every object has a conjugate object, then the category $\mathcal{C}$ is called a rigid $C^{\ast}$-tensor category.

Let $\mathrm{Irr}(\mathcal{C})$ denote the set of equivalence classes of irreducible objects in $\mathcal{C}$. Using the same notation as above, if $u$ is an irreducible object with a conjugate, then $d(u) = \|R \| \| \bar{R} \|$ is independent of the choice of the morphisms $R$ and $\bar{R}$. An arbitrary object $u$ in a rigid $C^{\ast}$-tensor category is unitarily equivalent to a direct sum $u \cong \bigoplus_{k} u_k$ of irreducible objects, and we put $d(u) = \sum_{k} d(u_k)$. The function $d : \mathcal{C} \to [0, \infty)$ defined in this way is called the intrinsic dimension of $\mathcal{C}$.   

\subsection{Multipliers on rigid $C^{\ast}$-tensor categories}
Multipliers on rigid $C^{\ast}$-tensor categories were introduced by Popa and Vaes \cite{popavaes}.
\begin{definition} \label{mult}
A multiplier on a rigid $C^{\ast}$-tensor category $\cC$ is a family of linear maps 
\[
  \theta_{\alpha,\beta} : \End(\alpha \ot \beta) \to  \End(\alpha \ot \beta)
\]
indexed by $\alpha, \beta \in \cC$ such that
\begin{align}
  \theta_{\alpha_2,\beta_2}(UXV^{\ast}) &= U\theta_{\alpha_1,\beta_1}(X)V^{\ast}, \nonumber \\
  \theta_{\alpha_1 \ot \alpha_2,\beta_1 \ot \beta_2} (1 \ot X \ot 1) &= 1 \ot \theta_{\alpha_2, \beta_1}(X) \ot 1 \label{eq:equation1}
\end{align}
for all $\alpha_i, \beta_i \in \cC, X \in \End(\alpha_2 \ot \beta_1)$ and $U,V \in \mathrm{Mor}(\alpha_1,\alpha_2) \ot \mathrm{Mor}(\beta_2,\beta_1)$.
\end{definition}
A multiplier $(\theta_{\alpha,\beta})$ is said to be completely positive (or a cp-multiplier) if all maps $\theta_{\alpha,\beta}$ are completely positive. A multiplier $(\theta_{\alpha,\beta})$ is said to be completely bounded (or a cb-multiplier) if all maps $\theta_{\alpha,\beta}$ are completely bounded and $\| \theta \|_{\mathrm{cb}} = \sup_{\alpha, \beta \in \cC} \| \theta_{\alpha,\beta}\|_{\mathrm{cb}} < \infty$. By \cite[Proposition 3.6]{popavaes}, every multiplier corresponds uniquely to a function $\varphi: \Irr(\cC) \to \C$ and we will often mean such a function when we speak of a multiplier.

\subsection{The fusion algebra and admissible $\ast$-representations} \label{subsec:admissiblerepresentations}
Recall that the fusion algebra $\C[\cC]$ of a rigid $C^{\ast}$-tensor category $\cC$ is defined as the free vector space with basis $\Irr(\cC)$ and multiplication given by
\[
  \alpha \beta = \sum_{\gamma \in \Irr(\cC)} \mult(\alpha \ot \beta, \gamma) \gamma, \ \ \ \alpha, \beta \in \Irr(\cC).
\]
In fact, the fusion algebra is a $\ast$-algebra when equipped with the involution $\alpha^{\sharp}=\bar{\alpha}$.

In \cite{popavaes}, Popa and Vaes defined the notion of admissible $\ast$-representation of $\mathbb{C}[\mathcal{C}]$ as a unital $\ast$-representation $\Theta: \C[\cC] \to B(\cH)$ such that for all $\xi \in \cH$ the map
\[
  \Irr(\cC) \to \C, \; \alpha \to d(\alpha)^{-1} \langle \Theta(\alpha) \xi, \xi \rangle
\]
is a cp-multiplier. Moreover, they proved the existence of a universal admissible $\ast$-representation and denoted the corresponding enveloping $C^{\ast}$-algebra of $\C[\cC]$ by $\uniC$.

\subsection{The tube algebra}
In \cite{ghoshjones}, the representation theory of rigid $C^{\ast}$-tensor categories was related to Ocneanu's tube algebra, which was introduced in \cite{ocneanu}. Let us recall the definition of the tube algebra. Let $\cC$ be a rigid $C^{\ast}$-tensor category. For each equivalence class $\alpha \in \mathrm{Irr}(\mathcal{C})$, choose a representative $X_{\alpha} \in \alpha$, and let $X_0$ denote the representative of the tensor unit. Moreover, let $\Lambda$ be a countable family of equivalence classes of objects in $\cC$ with distinct representatives $Y_{\beta} \in \beta$ for every $\beta \in \Lambda$. The annular algebra with weight set $\Lambda$ is defined as
\[
  \cA \Lambda = \bigoplus_{\al, \be \in \Lambda, \ \ga \in \Irr(\cC)} \Mor(X_{\ga} \ot Y_{\al}, Y_{\be} \ot X_{\ga}).
\]
The algebra $\cA \Lambda$ comes equipped with the structure of an associative $*$-algebra. We will always assume the weight set $\Lambda$ to be full, i.e.~every irreducible object is equivalent to a subobject of some element in $\Lambda$. The annular algebra with weight set $\Lambda = \Irr(\cC)$ is called the tube algebra of Ocneanu, and we write $\cA \Lambda = \tube$.

\subsection{$q$-deformations of compact simple Lie groups} \label{subsec:qdeformations}
A compact quantum group is a pair $(A,\Delta)$ consisting of a unital $C^{\ast}$-algebra $A$ and a unital $\ast$-homomorphism $\Delta : A \to A \otimes A$ (comultiplication) such that $(\Delta \otimes \id)\Delta=(\id \otimes \Delta)\Delta$ and such that $\mathrm{span}\{(A \otimes 1)\Delta(A)\}$ and $\mathrm{span}\{(1 \otimes A)\Delta(A)\}$ are dense in $A \otimes A$. The tensor product $\otimes$ denotes the minimal tensor product. For a recent thorough introduction to the theory of compact quantum groups, we refer to \cite{neshveyevtuset}.

Another class of quantum groups is the class of discrete quantum groups, which is dual to the class of compact quantum groups under an appropriate generalization of Pontryagin duality. In fact, the search for such an appropriate notion of duality was a central motivation in the early days of quantum group theory. The most general analytic framework for quantum groups is the theory of locally compact quantum groups, as introduced by Kustermans and Vaes \cite{kustermansvaes}. It includes both the compact and discrete quantum groups and the locally compact groups, and it provides a satisfying answer to the duality question. This theory is, however, significantly more complicated.

Notable examples of compact quantum groups are the $q$-deformations of compact Lie groups. We recall their construction below. For details, see e.g.~\cite[Section 2.4]{neshveyevtuset} or \cite{klimykschmuedgen}. Let $K$ be a connected simply connected compact simple Lie group. We restrict ourselves to the case of $K$ being simple, because we need this later. However, for the construction of $q$-deformations, this is not essential. Let $G = K_\mathbb{C}$ be the complexification of $K$. The group $G$ has an Iwasawa decomposition $G=KAN$. Let $\fg$ be the Lie algebra of $G$ and fix a Cartan subsalgebra $\fh$ of $\fg$. Let $\Delta$ be the associated set of roots, let $Q \subset \fh^*$ be the root lattice and $P \subset \fh^*$ the weight lattice. Denote by $(\cdot,\cdot)$ the natural bilinear form on $\fh$, which we assume to be normalized by $(\alpha,\alpha)=2$ for a short root $\alpha$. For each $\alpha \in \Delta$, define the coroot as $\alpha^\vee = \frac{2\alpha}{(\alpha,\alpha)}$. Choose a set $\Pi=\{\alpha_i \mid i \in I\}$ of simple roots, and let $\Delta_+$ denote the set of positive roots, $Q_+$ the positive elements in the root lattice and $P_+$ the positive weights. Put $d_i = \frac{(\alpha_i,\alpha_i)}{2}$, and denote by $a_{ij}=\frac{(\alpha_i,\alpha_j)}{d_i}$ the entries of the Cartan matrix.

Fix $q \in (0,1)$. Define $q_i = q^{d_i}$, and set
\[
  [n]_q = \frac{q^n - q^{-n}}{q - q^{-1}}, \qquad [n]_q! = [n]_q [n-1]_q \dots [1]_q \qquad \textrm{ and } \qquad \left[ \begin{matrix} n \\ m \end{matrix} \right]_q = \frac{[n]_q!}{[m]_q! [n-m]_q!}.
\]
The quantized enveloping algebra $U_q(\fg)$ of $\mathfrak{g}$ is the unital algebra defined by the generators $\{K_i^{\pm 1}, E_i, F_i \mid i \in I\}$ and the relations
\begin{align*}
  K_i K_i^{-1} = K_i^{-1} K_i = 1, &\qquad K_i K_j = K_j K_i,\\
  K_i E_j K_i^{-1} = q^{a_{ij}}_iE_j, &\qquad K_i F_j K_i^{-1} = q^{-a_{ij}}_iF_j,
\end{align*}
\[
  [E_i,F_j] = \delta_{ij} \frac{K_i - K_i^{-1}}{q_i - q_i^{-1}},\\
\]
\[
  \sum_{r=0}^{1-a_{ij}} (-1)^r \left[ \begin{matrix} 1- a_{ij} \\ r \end{matrix} \right]_{q_i} E_i^r E_j E_i^{1-a_{ij}-r} = 0, \qquad \sum_{r=0}^{1-a_{ij}} (-1)^r \left[ \begin{matrix} 1-a_{ij} \\ r \end{matrix} \right]_{q_i} F_i^r F_j F_i^{1-a_{ij}-r} = 0.
\]
Note that the quantized enveloping algebra is a deformation of the universal enveloping algebra of $\mathfrak{g}$. In fact, the quantized enveloping algebra $U_q(\mathfrak{g})$ can be turned into a Hopf $*$-algebra by defining a comultiplication $\hat{\Delta}_q$ and involution $^{\ast}$ by
\begin{align*}
  &\hat{\Delta}_q(K_i)=K_i \otimes K_i, \qquad \hat{\Delta}_q(E_i)=E_i \otimes 1 + K_i \otimes E_i, \qquad \hat{\Delta}_q(F_i)=F_i \otimes K_i^{-1} + 1 \otimes F_i,\\
  &K_i^{\ast}=K_i, \qquad E_i^{\ast}=F_iK_i, \qquad F_i^{\ast}=K_i^{-1}E_i.
\end{align*}
Recall that the counit $\hat{\varepsilon}_q$ and the antipode $\hat{S}_q$ are given by the formulas
\[
  \hat{\varepsilon}_q(K_i)  =  1, \qquad \hat{\varepsilon}_q(E_i) = \hat{\varepsilon}_q (F_i) = 0,
\]
\[
  \hat{S}_q (K_i) = K_i^{-1}, \qquad \hat{S}_q(E_i) = -K_i^{-1} E_i, \qquad \hat{S}_q(F_i) = -F_i K_i.
\]

Let $V$ be an $U_q(\fg)$-module, and let $\mu \in P$. The weight space $V_{\mu}$ is defined as
\[
  V_{\mu} = \{ v \in V \mid K_i v = q_i^{(\mu, \al_i^{\vee}) } v \ \forall i \in I \}.
\]
The module $V$ is said to be of type $1$ if $V$ decomposes as a direct sum $V = \bigoplus_{\mu \in P}$. If a vector $v \in V$ is an element of the direct summand $V_{\mu}$, we sometimes refer to the weight $\mu$ as ${\rm wt}(v)$.

For each $\lambda \in P_+$, there exists a uniquely determined irreducible module $V(\lambda)$ of highest weight $\lambda$, i.e.~$V(\lambda) = U_q(\fg) v_{\lambda}$ for a nonzero vector $v_\lambda \in V(\lambda)$ satisfying $K_i v_\lambda = q^{(\lambda,\alpha_i^\vee)}_i v_\lambda$ and $E_i v_\lambda = 0$ for all $i \in I$. The module $V(\lambda)$ is finite-dimensional and admits an invariant inner product.

For $v,w \in V_{\mu}$, define $u^\mu_{vw} \in U_q(\fg)^*$ by $(u^\mu_{vw},x) = (xv,w)$. Let $\cO(K_q)$ be the quantum coordinate algebra, i.e.~the subspace of $U_q(\fg)^*$ consisting of matrix coefficients of finite-dimensional unitary modules of type 1. More precisely,
\[
  \cO(K_q) = {\rm span}\{u^\mu_{vw} \mid \mu \in P, v,w \in V_{\mu}\}.
\]
The algebra $\cO(K_q)$ admits a unique Hopf $*$-algebra structure that turns the pairing $\cO(K_q) \times U_q(\fg) \to {\mathbb C}$ into a Hopf $\ast$-algebra pairing. More concretely, multiplication, involution and comultiplication on $\cO(K_q)$ are given by the formulas
\[
  (ab)(x) = (a\ot b)(\hat{\Delta}_q(x)), \qquad a^*(x) = \overline{a(\hat{S}_q(x)^*)}, \qquad \textrm{and} \qquad \Delta_q(a)(x \ot y) = a(xy)
\]
for $a,b \in \cO(K_q)$ and $x,y \in U_q(\fg)$. The universal $C^{\ast}$-completion $C(K_q)$ of the $\ast$-algebra $\cO(K_q)$ is (the $C^{\ast}$-algebra associated with) the compact quantum group $K_q$.

\section{The Howe-Moore property for representation categories} \label{sec:thma}
In this section, we prove the Howe-Moore property for the representation categories of $q$-deformations of connected compact simple Lie groups with trivial center, i.e.~Theorem \ref{thm:hm}.
\begin{definition} \label{dfn:hmtc}
  A rigid $C^{\ast}$-tensor category $\mathcal{C}$ is said to have the Howe-Moore property if for every completely positive multiplier $\omega : {\rm{Irr}}({\mathcal{C}}) \to \mathbb{C}$, we have $\omega \in c_0(\mathrm{Irr}(\mathcal{C})) \oplus \mathbb{C}$.
\end{definition}
In the case where $\mathcal{C}$ is the representation category of a compact quantum group $\mathbb{G}$, it turns out that the Howe-Moore property for $\mathrm{Rep}(\mathbb{G})$ is equivalent to a central version of the Howe-Moore property, i.e.~a version of the Howe-Moore property for the central states on the quantum coordinate algebra $\mathcal{O}(\mathbb{G})$ of $\mathbb{G}$. This central Howe-Moore property can be viewed as a property for the dual $\widehat{\mathbb{G}}$ of $\mathbb{G}$. The dual of a compact quantum group is a discrete quantum group. As indicated in Section \ref{sec:introduction}, the approach of structural properties of quantum groups through central versions of these properties for their duals goes back to \cite{decommerfreslonyamashita}. It also played an important role in the work of the first-named author on property (T) in the setting of quantum groups \cite{arano1}.
\begin{definition} \label{dfn:chm}
  Let $\mathbb{G}$ be a compact quantum group. The discrete quantum group $\widehat{\mathbb{G}}$ is said to have the central Howe-Moore property if $\omega \in c_0(\widehat{\mathbb{G}}) \oplus \mathbb{C}$ for every central state $\omega$ on $\mathcal{O}(\mathbb{G})$.
\end{definition}
The following result relates the Howe-Moore property for representation categories to the central Howe-Moore property of the duals of the underlying quantum groups. It follows immediately from the fact that completely positive multipliers on the category $\mathrm{Rep}(\mathbb{G})$ coincide with the central states of $\mathbb{G}$ (see \cite[Proposition 6.1]{popavaes}).
\begin{proposition}
  Let $\mathbb{G}$ be a compact quantum group. The representation category $\mathrm{Rep}(\mathbb{G})$ has the Howe-Moore property for rigid $C^{\ast}$-tensor categories if and only if the dual $\widehat{\mathbb{G}}$ of $\mathbb{G}$ has the central Howe-Moore property.
\end{proposition}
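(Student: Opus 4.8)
The plan is to unwind the two definitions and observe that they match, term by term, under the correspondence between completely positive multipliers on $\mathrm{Rep}(\mathbb{G})$ and central states on $\mathcal{O}(\mathbb{G})$ supplied by \cite[Proposition 6.1]{popavaes}. Since the statement asserts an ``if and only if'' that the text already flags as immediate, the work is entirely a matter of setting up the right dictionary and checking that no piece of data is distorted in the passage between the two pictures.

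First I would fix the dictionary on the level of index sets. The irreducible objects of $\mathrm{Rep}(\mathbb{G})$ are exactly the equivalence classes of irreducible unitary representations of $\mathbb{G}$, and these are precisely the blocks of the dual discrete quantum group $\widehat{\mathbb{G}}$. Hence the central part of $\ell^{\infty}(\widehat{\mathbb{G}})$ is canonically $\ell^{\infty}(\mathrm{Irr}(\mathrm{Rep}(\mathbb{G})))$, the elements of $c_0(\widehat{\mathbb{G}})$ lying in this center form $c_0(\mathrm{Irr}(\mathrm{Rep}(\mathbb{G})))$, and the unit of $\widehat{\mathbb{G}}$ provides the distinguished copy of $\mathbb{C}$. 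Consequently the subspace $c_0 \oplus \mathbb{C}$ appearing in Definition \ref{dfn:hmtc} and the one appearing in Definition \ref{dfn:chm} refer to the same space of functions on $\mathrm{Irr}(\mathrm{Rep}(\mathbb{G})) = \widehat{\mathbb{G}}$.

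Next I would invoke \cite[Proposition 6.1]{popavaes}. A central state $\omega$ on $\mathcal{O}(\mathbb{G})$ is constant on each block, hence given by a single scalar $\omega(\alpha)$ for every $\alpha \in \mathrm{Irr}(\mathrm{Rep}(\mathbb{G}))$; the cited proposition identifies the resulting family of scalars with a completely positive multiplier $\varphi : \mathrm{Irr}(\mathrm{Rep}(\mathbb{G})) \to \mathbb{C}$, and conversely every cp-multiplier arises in this way. The essential observation is that, once the normalization at the trivial object is fixed, this bijection is the identity on the underlying function on $\mathrm{Irr}(\mathrm{Rep}(\mathbb{G}))$: regarded as a central element of $\ell^{\infty}(\widehat{\mathbb{G}})$, the central state $\omega$ is literally the function $\varphi$. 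In particular the correspondence does not alter the asymptotic behaviour of the function, so $\varphi \in c_0(\mathrm{Irr}(\mathrm{Rep}(\mathbb{G}))) \oplus \mathbb{C}$ if and only if the associated central state lies in $c_0(\widehat{\mathbb{G}}) \oplus \mathbb{C}$. Combining the two points closes the argument: $\mathrm{Rep}(\mathbb{G})$ has the Howe-Moore property exactly when every cp-multiplier lies in $c_0 \oplus \mathbb{C}$, which by the bijection happens exactly when every central state lies in $c_0(\widehat{\mathbb{G}}) \oplus \mathbb{C}$, i.e.\ exactly when $\widehat{\mathbb{G}}$ has the central Howe-Moore property. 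The only step that demands genuine care, and thus the main (minor) obstacle, is verifying that the correspondence of \cite[Proposition 6.1]{popavaes} respects the identification $c_0(\mathrm{Irr}(\mathrm{Rep}(\mathbb{G}))) = c_0(\widehat{\mathbb{G}})$ on the nose, that is, that passing between a central state and its cp-multiplier involves no reweighting (such as a division by the quantum dimension $d(\alpha)$) that could spoil membership in $c_0 \oplus \mathbb{C}$; once this compatibility is confirmed, the equivalence is immediate.
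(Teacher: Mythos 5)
Your proposal is correct and follows exactly the paper's route: the paper also deduces the equivalence immediately from the identification of completely positive multipliers on $\mathrm{Rep}(\mathbb{G})$ with central states on $\mathcal{O}(\mathbb{G})$ via \cite[Proposition 6.1]{popavaes}. Your extra care about the dictionary is sound -- the scalar by which a central state acts on each block of $\ell^{\infty}(\widehat{\mathbb{G}})$ is precisely the multiplier value $\varphi(\alpha)$ (the quantum-dimension factor only enters when evaluating on characters, not in this identification), so membership in $c_0 \oplus \mathbb{C}$ is indeed preserved on the nose.
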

Let us now start working towards the proof of Theorem \ref{thm:hm}.
\begin{lemma} \label{lemma:subrootsystem}
	Let $\Delta$ be a root system with a root subsystem $\Delta^0 \subset \Delta$. Then for all $\alpha \in \Delta$, we have the following: whenever there exists $\beta \in \Delta \setminus \Delta^0$ such that $(\alpha,\beta) \neq 0$, then $\alpha \in \mathop{\rm span} (\Delta \setminus \Delta^0)$.
	
	In particular, if $\Delta$ is irreducible and $\Delta^0$ is a proper subsystem, then $\mathop{\rm span} (\Delta \setminus \Delta^0) = \fh^*$.
\end{lemma}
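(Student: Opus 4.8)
The plan is to prove the general (first) statement by showing that the subspace $E := \spann(\Delta \setminus \Delta^0)$ of $\fh^*$ is stable under every reflection $s_\alpha$ with $\alpha \in \Delta$, and then to read off the conclusion from a single such reflection. Writing $\Delta^1 := \Delta \setminus \Delta^0$, I would first establish this stability by splitting into two cases. If $\alpha \in \Delta^0$, then since $\Delta^0$ is a subsystem it is stable under the reflections in its own roots, so $s_\alpha$ permutes $\Delta^0$; as $s_\alpha$ also permutes $\Delta$, it permutes $\Delta^1 = \Delta \setminus \Delta^0$ and hence fixes $E = \spann(\Delta^1)$. If instead $\alpha \in \Delta^1$, then $\alpha \in E$, and for every $\delta \in \Delta^1 \subseteq E$ we have $s_\alpha(\delta) = \delta - \langle \delta, \alpha^\vee \rangle \alpha \in E$; thus $s_\alpha(E) \subseteq E$, with equality since $s_\alpha$ is an involution.

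With this stability in hand, the general statement is immediate. Given $\alpha \in \Delta$ and $\beta \in \Delta^1$ with $(\alpha, \beta) \neq 0$, set $c := \langle \beta, \alpha^\vee \rangle = 2(\alpha, \beta)/(\alpha, \alpha) \neq 0$. Then $s_\alpha(\beta) = \beta - c\alpha$ lies in $E$ by the stability just proved, and $\beta \in E$ as well, so $\alpha = c^{-1}\bigl( \beta - s_\alpha(\beta) \bigr) \in E = \spann(\Delta \setminus \Delta^0)$, as claimed.

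For the ``in particular'' clause I would deduce from the general statement that every root lies in $E$ or in $E^\perp$: if $\alpha \notin E^\perp$ then $(\alpha, \beta) \neq 0$ for some $\beta \in \Delta^1$, whence $\alpha \in E$. Thus $\Delta = (\Delta \cap E) \sqcup (\Delta \cap E^\perp)$, and for irreducible $\Delta$ with proper $\Delta^0$ the set $\Delta \cap E \supseteq \Delta^1$ is nonempty; were $\Delta \cap E^\perp$ also nonempty, this orthogonal splitting would contradict irreducibility. Hence $\Delta \subseteq E$ and $E = \spann(\Delta) = \fh^*$. I expect the only genuine obstacle to be locating the right object to reflect: reflecting individual roots and tracking whether $s_\beta(\alpha)$ stays inside $\Delta^0$ runs into the question of whether the subsystem is saturated (i.e.\ $\Delta^0 = \Delta \cap \spann(\Delta^0)$), whereas reflecting the subspace $E$ itself — and using crucially that $s_\alpha$ preserves $E$ for $\alpha \in \Delta^1$ precisely because $\alpha \in E$ — sidesteps this entirely and uses only that $\Delta^0$ is closed under its own reflections. (Equivalently, the two cases show $E$ is invariant under the full Weyl group, so the ``in particular'' part also follows from irreducibility of the $W$-action on $\fh^*$.)
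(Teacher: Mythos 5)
Your proof is correct, but it takes a genuinely different route from the paper's for the first assertion. The paper disposes of that assertion by ``a consideration of root systems with rank $2$'': implicitly, one restricts to the rank-two root system $\Delta \cap \spann\{\alpha,\beta\}$ and checks, type by type ($A_2$, $B_2$, $G_2$; the reducible type being excluded since $(\alpha,\beta)\neq 0$), that the complement of any proper subsystem spans the plane and hence contains $\alpha$ in its span. Your argument replaces this case analysis with a single structural observation: $E = \spann(\Delta\setminus\Delta^0)$ is invariant under every reflection $s_\alpha$ with $\alpha\in\Delta$ --- for $\alpha\in\Delta^0$ because $s_\alpha$ permutes both $\Delta$ and $\Delta^0$ (this is exactly where you use that $\Delta^0$ is reflection-closed, i.e.\ a subsystem), and for $\alpha\in\Delta\setminus\Delta^0$ because $\alpha\in E$ --- after which $\alpha = c^{-1}\bigl(\beta - s_\alpha(\beta)\bigr)\in E$ falls out by pure linear algebra. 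This buys uniformity in rank, avoids any appeal to the classification of rank-two systems, and, as you correctly note, sidesteps the saturation issue that would derail the naive attempt to track whether $s_\beta(\alpha)$ lands in $\Delta^0$. For the ``in particular'' clause your argument is essentially the paper's: the paper sets $\Delta^1 = \Delta\cap\spann(\Delta\setminus\Delta^0)$ and $\Delta^2 = \Delta\setminus\Delta^1$ and observes these are perpendicular root systems, which is the same orthogonal splitting $\Delta = (\Delta\cap E)\sqcup(\Delta\cap E^\perp)$ you contradict against irreducibility; your parenthetical alternative via irreducibility of the Weyl-group action on $\fh^*$ is a further shortcut that the $W$-stability of $E$ gives you for free.
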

\begin{proof}
The first assertion follows in a straightforward way from a consideration of root systems with rank $2$. The second assertion follows from the fact that $\Delta^1 = \Delta \cap \mathop{\rm span} (\Delta \setminus \Delta^0)$ and $\Delta^2 = \Delta \setminus \Delta^1$ are root systems that are perpendicular to each other.
\end{proof}
The following result constitutes a crucial ingredient of our approach. Let $K$ be a connected compact simple Lie group (with center $Z(K)$), and fix a Cartan subalgebra of its Lie algebra. Let $T$ be the associated maximal torus, $\Delta$ the set of roots (with positive part $\Delta_+$) and $P$ the weight lattice (with positive part $P_+$). Let $\rho = \frac{1}{2} \sum_{\alpha \in \Delta_+} \alpha$. For $\lambda \in P_+$, the highest weight representation is denoted by $V(\lambda)$. The character of $V(\lambda)$ is denoted by $\chi_{\lambda}$. We refer to Section \ref{subsec:qdeformations} for details on the aforementioned structures.
	\begin{proposition} \label{prop:characterconvergence}
	Let $K$ be a connected compact simple Lie group, and let $T$, $P_+$ and $V(\lambda)$ be as above. For every $t \in T \setminus Z(K)$, we have
	\[
	  \frac{1}{\dim(V(\lambda))}\chi_\lambda(t) \to 0 \qquad \text{as} \qquad \lambda \to \infty.
	\]
	\end{proposition}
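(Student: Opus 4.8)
The plan is to reduce everything to the Weyl character formula and the Weyl dimension formula, treating regular and singular non-central elements of $T$ uniformly through the centralizer of $t$. Writing $t^\mu$ for the value $e^\mu(t)$ of the character $\mu\in P$ at $t$, I fix $t\in T\setminus Z(K)$ and set $\Delta^0=\{\alpha\in\Delta:\ t^\alpha=1\}$, the roots that do not separate $t$ from the identity; this is a root subsystem. Since $Z(K)=\{s\in T:\ s^\alpha=1\ \forall\alpha\in\Delta\}$ and $t\notin Z(K)$, the subsystem $\Delta^0$ is proper, so Lemma \ref{lemma:subrootsystem} applies and gives $\spann(\Delta\setminus\Delta^0)=\fh^*$; this spanning statement is the geometric input that will force $\dim V(\lambda)$ to grow. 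Let $L=Z_K(t)^\circ$ be the connected centralizer, a closed connected subgroup with maximal torus $T$, root system $\Delta^0$, positive system $\Delta^0_+$, Weyl group $W_0=W(\Delta^0)$ and half-sum of positive roots $\rho_0$. Because $t^\alpha=1$ for every $\alpha\in\Delta^0$, the element $t$ lies in the center of $L$.

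Next I would express $\chi_\lambda(t)$ through $L$. Writing the Weyl character formula as $\chi_\lambda=A_{\lambda+\rho}/A_\rho$ with $A_\mu=\sum_{w\in W}\operatorname{sgn}(w)e^{w\mu}$, I group the Weyl sum over the cosets $W_0\backslash W$, using that $t$ is fixed by $W_0$ (since $t^\alpha=1$ for $\alpha\in\Delta^0$ yields $t^{u\mu}=t^\mu$ for $u\in W_0$). Cancelling the factors of the Weyl denominator that vanish at $t$ against the corresponding vanishing of $A_{\lambda+\rho}$ is precisely the subgroup Weyl character formula, and using that $t$ is central in $L$ (so $\chi^L_\sigma(t)=\dim(L_\sigma)\,t^\sigma$ for the irreducible $L$-module $L_\sigma$) it produces, for the set $W^0$ of minimal-length coset representatives, an identity of the shape
\[
  \chi_\lambda(t)=\frac{t^{-\rho^0}}{D(t)}\sum_{v\in W^0}\operatorname{sgn}(v)\,\dim\!\big(L_{v(\lambda+\rho)-\rho_0}\big)\,t^{\,v(\lambda+\rho)-\rho_0},
\]
where $\rho^0=\rho-\rho_0$ and the nonzero constant $D(t)=\prod_{\alpha\in\Delta_+\setminus\Delta^0_+}\big(t^{\alpha/2}-t^{-\alpha/2}\big)$ collects the surviving denominator factors. (The regular case is the special case $\Delta^0=\emptyset$, where this reduces to the ordinary Weyl character formula.)

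Since each $t^{\,v(\lambda+\rho)-\rho_0}$ has modulus one and $D(t),W^0$ do not depend on $\lambda$, it suffices to prove that $\dim(L_{v(\lambda+\rho)-\rho_0})/\dim V(\lambda)\to 0$ for each fixed $v\in W^0$. Here I insert the two Weyl dimension formulas. The minimal-length representatives satisfy $v^{-1}\Delta^0_+\subseteq\Delta_+$, so in the quotient all numerator factors cancel and one is left with
\[
  \frac{\dim(L_{v(\lambda+\rho)-\rho_0})}{\dim V(\lambda)}=C_v\prod_{\alpha\in R_v}\frac{(\rho,\alpha)}{(\lambda+\rho,\alpha)},\qquad R_v:=\Delta_+\setminus v^{-1}\Delta^0_+,
\]
for a constant $C_v$. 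Since $(\lambda+\rho,\alpha)\ge(\rho,\alpha)>0$ for $\alpha\in\Delta_+$, every factor is at most $1$, so the quotient tends to $0$ as soon as a single factor does, that is, as soon as $\max_{\alpha\in R_v}(\lambda,\alpha)\to\infty$.

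The decisive point, and the main obstacle, is to see that this happens for every way in which $\lambda\to\infty$, which comes down to showing that $R_v$ spans $\fh^*$ for each $v\in W^0$; this is exactly where Lemma \ref{lemma:subrootsystem} is used. Indeed, if $R_v$ failed to span there would be $0\neq\mu\in\fh^*$ with $\mu\perp R_v$; then any $\alpha\in\Delta$ with $(\mu,\alpha)\neq0$ must lie in $v^{-1}\Delta^0$, so for $\eta:=v\mu\neq0$ the orthogonality relation $(\mu,v^{-1}\beta)=(\eta,\beta)$ shows that $\eta$ is orthogonal to every root of $\Delta\setminus\Delta^0$, contradicting $\spann(\Delta\setminus\Delta^0)=\fh^*$. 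Granting that $R_v$ spans, the map $\lambda\mapsto\max_{\alpha\in R_v}(\lambda,\alpha)$ agrees on the dominant cone with a norm on $\fh^*$, hence tends to infinity along any net $\lambda\to\infty$ in $P_+$; the corresponding factor tends to $0$ and Proposition \ref{prop:characterconvergence} follows. I expect the technical heart of the write-up to be the careful passage to the singular character formula (the cancellation of the vanishing denominator at $t$ and the bookkeeping of the coset representatives), after which the growth estimate is short.
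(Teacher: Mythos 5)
Your proof is correct and follows essentially the same route as the paper: the same decomposition of the Weyl sum over cosets of $W_0 \backslash W$, the same identification of the inner sums with subsystem Weyl dimension factors, the same reduction (via the two dimension formulas) to showing $\prod_{\alpha \in \Delta_+\setminus v^{-1}\Delta_+^0}(\lambda+\rho,\alpha)^{-1} \to 0$, and the same contradiction argument using Lemma \ref{lemma:subrootsystem} transported by the coset representative. The only real difference is presentational: you cite the singular (``subgroup'') Weyl character formula via the centralizer $Z_K(t)^\circ$ as a known identity, whereas the paper derives precisely that identity by the two-step limit $t \to t_0$, $s \to 1$ inside the Weyl character formula --- which is exactly the ``technical heart'' you predicted a full write-up would need.
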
	
	\begin{proof}
	We use the Weyl character formula (see e.g.~\cite[Theorem V.1.7]{helgason}), which computes the character $\chi_\lambda$ of a highest weight representation $V(\lambda)$ of highest weight $\lambda$ of a connected compact simple Lie group:
	\[
	  \chi_\lambda(e^H) = \frac{\sum_{w \in W} (-1)^{l(w)} e^{w(\lambda + \rho)(H)}}{e^{\rho(H)} \prod_{\alpha \in \Delta_+} (1-e^{-\alpha(H)})}
	\]
	and the Weyl dimension formula (see e.g.~\cite[Theorem V.1.8]{helgason}), which computes the dimension of the highest weight representation $V(\lambda)$:
	\[
	  \dim(V(\lambda)) = \frac{\prod_{\alpha \in \Delta_+} (\lambda + \rho,\alpha)}{\prod_{\alpha \in \Delta_+} (\rho, \alpha)}.
	\]
	In the rest of this section, we will write $t=e^H$ and e.g.~$t^{\alpha}$ for $e^{\alpha(H)}$.
	
	Fix $t_0 \in T \setminus Z(K)$. Then $\Delta^0 = \{\alpha \in \Delta \mid t_0^{-\alpha} = 1\} \neq \Delta$ is a root system with positive roots $\Delta_+^0 = \{\alpha \in \Delta_+ \mid t_0^{-\alpha} = 1\}$ and Weyl group $W_0 = \{w \in W \mid w t_0 = t_0\}$ (see \cite[Proposition 6.6]{kac}). Put $\rho_0 = \frac{1}{2} \sum_{\alpha \in \Delta^0_+} \alpha$. Fix representatives of $W_0 \backslash W$ in $W$.
	Then
	\begin{align*}
	\chi_\lambda(t_0) &= \lim_{t \to t_0} \sum_{w' \in W_0 \backslash W}  (-1)^{l(w')} \frac{1}{t^\rho \prod_{\alpha \in \Delta_+ \setminus \Delta_+^0}(1-t^{-\alpha})} \frac{\sum_{w \in W_0} (-1)^{l(w)}t^{w w'(\lambda + \rho)}} {\prod_{\alpha \in \Delta_+^0}(1-t^{-\alpha})} \\
	&= \lim_{s \to 1} \sum_{w' \in W_0 \backslash W}  (-1)^{l(w')} \frac{t_0^{w^{\prime}(\lambda + \rho)}}{t_0^{\rho}s^{\rho-\rho_0} \prod_{\alpha \in \Delta_+ \setminus \Delta_+^0}(1-(t_0 s)^{-\alpha})} \frac{\sum_{w \in W_0} (-1)^{l(w)}s^{w w'(\lambda + \rho)}} {s^{\rho_0}\prod_{\alpha \in \Delta_+^0}(1-s^{-\alpha})},
	\end{align*}
	where we have used the invariance of $t_0$ under the action of $W_0$ in the second equality. Note that by taking the limit $s \to 1$ in the Weyl character formula for the subsystem $\Delta^0$ in the same way as in the proof of the Weyl dimension formula, we obtain
	\[ \lim_{s \to 1} \frac{\sum_{w \in W_0} (-1)^{l(w)}s^{w w'(\lambda + \rho)}} {s^{\rho_0}\prod_{\alpha \in \Delta_+^0}(1-s^{-\alpha})}  \ = \ \frac{\prod_{\alpha \in \Delta_+^0} (w' (\lambda + \rho),\alpha)}{\prod_{\alpha \in \Delta_+^0} (\rho_0, \alpha)}. \]
	Moreover, since $t_0^{\rho} s^{\rho-\rho_0} \prod_{\alpha \in \Delta_+ \setminus \Delta_+^0}(1-(t_0 s)^{-\alpha})$ is non-zero whenever $s$ is sufficiently close to $1$, it suffices to show that for every $w'$, we have 
	\[
	  \frac{\prod_{\alpha \in \Delta_+^0} (w' (\lambda + \rho),\alpha)}{\prod_{\alpha \in \Delta_+} (\lambda + \rho,\alpha)} \to 0 \qquad \text{as} \qquad \lambda \to \infty.
	\]
	To this end, put
	\[\Delta_+^{w',0} = \{\beta \in \Delta_+ \mid \pm \beta \in (w')^{-1} \Delta_+^0\}.\]
	Then
	\[ \frac{\prod_{\alpha \in \Delta_+^0} (w' (\lambda + \rho),\alpha)}{\prod_{\alpha \in \Delta_+} (\lambda + \rho,\alpha)} = \pm \prod_{\alpha \in \Delta_+ \setminus \Delta_+^{w',0}}\frac{1}{(\lambda+\rho,\alpha)}.\]
	Note that every factor $\frac{1}{(\lambda+\rho,\alpha)}$ is at most $1$. Write $\lambda$ as a linear combination of $\varpi_i$, where $\varpi_i$ is the fundamental weight, i.e.\ $(\varpi_i,\alpha_j^\vee) = \delta_{ij}$. Then the maximum of the coefficients tends to infinity as $\lambda$ tends to infinity. Hence, we only need to show that for all $i \in I$, there exists an $\alpha \in \Delta_+ \setminus \Delta_+^{w',0}$ such that $(\varpi_i,\alpha) \neq 0$.
	
	Suppose that this is not the case. Then there exists an $i \in I$ such that for all $\alpha \in \Delta_+ \setminus \Delta_+^{w',0}$, we have $(\varpi_i,\alpha) = 0$. This shows that $(w' \varpi_i,\alpha)=0$ for any $\alpha \in \Delta \setminus \Delta^0$. From Lemma \ref{lemma:subrootsystem} and the assumption that $K$ is simple, we obtain that $(w' \varpi_i,\alpha) = 0$ for all $\alpha \in {\rm span} (\Delta_+ \setminus \Delta^0_+) = \fh^*$, which is a contradiction.
	\end{proof}
	Recall that the character algebra of $C(K_q)$ is the $C^{\ast}$-subalgebra of $C(K_q)$ spanned by
	\[
	  \chi^q_\lambda := \sum_{i} u^\lambda_{vv},
	\]
	where $u^\lambda_{vv}$ is as in Section \ref{subsec:qdeformations}.
	\begin{theorem} \label{thm:generalhm}
	Let $K$ be a connected compact simple Lie group, and let $\mathcal{C}$ be a rigid $C^{\ast}$-tensor category satisfying the fusion rules of $K$ (i.e.~the fusion ring of $\mathcal{C}$ is isomorphic to the fusion ring of $K$). For every completely bounded multiplier $\omega$ on $\mathcal{C}$, there exists a map $c: Z(K) \to \mathbb{C}$ such that
	\[
	  \omega(\lambda) - \sum_{t \in Z(K)} c(t) t^\lambda \to 0 \qquad \textrm{as} \qquad \lambda \to \infty.
	\]	
	In particular, if $K$ has trivial center, then the category $\mathcal{C}$ has the Howe-Moore property.
	\end{theorem}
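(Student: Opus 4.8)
The plan is to realize any completely bounded multiplier as integration of \emph{normalized classical characters} against a finite complex measure on the compact torus $T/W$, and then to read off the asymptotics from Proposition \ref{prop:characterconvergence}. First I would use that $\cC$ has the fusion rules of $K$ to identify the fusion $*$-algebra $\C[\cC]$ with the representation ring $R(K)\otimes\C\cong\C[P]^W$; in particular $\C[\cC]$ is commutative and the involution $\lambda^\sharp=\bar\lambda$ corresponds to complex conjugation of characters. I identify $\Irr(\cC)$ with $P_+$ and realize $\C[\cC]$ as a dense $*$-subalgebra of $C(T/W)$ via the classical characters $\lambda\mapsto\chi_\lambda$, noting that $\sup_{T/W}|\chi_\lambda|=\dim V(\lambda)$, attained at the identity coset. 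Since the fusion ring is commutative, every admissible $*$-representation has commuting image, so the universal $C^{\ast}$-algebra $\uniC$ is itself commutative.

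The central step is to attach to a cb-multiplier $\omega$ a finite complex Borel measure $\mu$ on $T/W$ with
\[
  \omega(\lambda)=\frac{1}{\dim V(\lambda)}\int_{T/W}\chi_\lambda(t)\,d\mu(t).
\]
For cp-multipliers this is the admissible-representation picture of Popa--Vaes: a cp-multiplier is a state on the commutative algebra $\uniC$, hence a positive measure on its Gelfand spectrum; the cb case is the bounded-functional analogue, yielding a complex measure. The key point I would establish is that, after normalizing classically by $\dim V(\lambda)$, this measure lives on the compact part $T/W$: the $*$-characters of $\C[\cC]$ are evaluations at $W$-orbits $t$ with $\bar t$ conjugate to $t^{-1}$, and off the compact torus one has $|\chi_\lambda(t)|/\dim V(\lambda)\to\infty$ along suitable $\lambda\to\infty$. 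Boundedness of a cb-multiplier ($\sup_\lambda|\omega(\lambda)|<\infty$) then forces $\mu$ to be supported on $T/W$. This is precisely the identification of the character algebra of $\cC$ with $C(T/W)$ announced in the introduction.

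Granting the measure, the rest is an application of Proposition \ref{prop:characterconvergence}. I split $\mu=\mu_Z+\mu'$, where $\mu_Z=\sum_{t\in Z(K)}c(t)\delta_t$ is the restriction to the finite central set $Z(K)\subset T/W$ and $\mu'$ is the restriction to $T/W\setminus Z(K)$. Each $t\in Z(K)$ acts on $V(\lambda)$ by the scalar $t^\lambda$, so $\chi_\lambda(t)=\dim V(\lambda)\,t^\lambda$ and the central part contributes exactly $\sum_{t\in Z(K)}c(t)t^\lambda$. For the remainder, Proposition \ref{prop:characterconvergence} gives $\dim V(\lambda)^{-1}\chi_\lambda(t)\to0$ for every $t\in T/W\setminus Z(K)$, while $|\dim V(\lambda)^{-1}\chi_\lambda(t)|\le1$ uniformly; as $|\mu'|$ is finite, dominated convergence yields $\dim V(\lambda)^{-1}\int\chi_\lambda\,d\mu'\to0$. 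Combining the two contributions gives $\omega(\lambda)-\sum_{t\in Z(K)}c(t)t^\lambda\to0$, and when $Z(K)$ is trivial this reads $\omega(\lambda)\to c(e)$, i.e.\ $\omega\in c_0(\Irr(\cC))\oplus\C$, which is the Howe--Moore property.

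The hard part will be the measure correspondence of the second paragraph, and specifically the localization of $\mu$ to the compact torus. Producing a measure from a cp-multiplier is the admissible-representation theory of Popa--Vaes, but the delicate point is excluding contributions from the noncompact directions of the spectrum, the ``complexified'' part that, for $\Rep(K_q)$, reflects the representation theory of the complexification $K_{\C}$. I expect to control this through the boundedness of cb-multipliers together with the exponential growth of $|\chi_\lambda(t)|/\dim V(\lambda)$ off $T/W$, which is the quantitative counterpart of Proposition \ref{prop:characterconvergence}; the proposition itself then does the remaining work of killing every noncentral compact direction.
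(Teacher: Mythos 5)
Your closing argument (extracting the atoms of the measure at $Z(K)$, then combining Proposition \ref{prop:characterconvergence} with dominated convergence) is exactly the paper's final step and is correct. The genuine gap is in your central step, and the localization argument you propose cannot work as stated, because it conflates two different completions of the fusion algebra and, correspondingly, two different normalizations. States on $\uniC$ parametrize cp-multipliers through the \emph{categorical} dimension, $\omega(\lambda)=d(\lambda)^{-1}\psi(\lambda)$, and for the categories this theorem is really about ($\Rep(K_q)$ with $q\neq 1$, Kazhdan--Wenzl, free orthogonal) one has $d(\lambda)=\chi_\lambda(q^{2\rho})$, not $\dim V(\lambda)$. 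A character of $\uniC$ restricts to a character of $\C[\cC]\cong\C[P]^W$, i.e.\ an evaluation at a point $x$ of $T_\C/W$, and the associated extremal cp-multiplier $\lambda\mapsto \chi_\lambda(x)/d(\lambda)$ is bounded by $1$ for \emph{every} point of the Gelfand spectrum, compact or not; so $\sup_\lambda|\omega(\lambda)|<\infty$ puts no constraint at all on the support of the representing measure. Concretely, for $\cC=\Rep(K_q)$ the trivial multiplier $\omega\equiv 1$ \emph{is} the character of $\uniC$ given by evaluation at $q^{2\rho}\notin T/W$, so its representing measure is a Dirac mass off the compact torus; and by Theorem \ref{thm:arano} the spectrum of $\uniC$ contains a whole family of such noncompact (complementary-series type) points. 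Your blow-up estimate $|\chi_\lambda(t)|/\dim V(\lambda)\to\infty$ off $T/W$ is true but irrelevant, since $\chi_\lambda(x)/\dim V(\lambda)$ is not the integrand that the $\uniC$-picture produces. Separately, the first half of the step is also unjustified: a cb-multiplier need not extend to a bounded functional on $\uniC$; bounded functionals on $\uniC$ give precisely the linear span of cp-multipliers, and in this non-amenable setting there is no a priori reason for that span to exhaust the cb-multipliers (compare $B(G)\subsetneq M_0A(G)$ for non-amenable groups).

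The paper avoids both problems by working in the regular completion instead of the universal one. A cb-multiplier $\omega$ induces a \emph{normal completely bounded map} on the von Neumann algebra $\tube''$ of the tube algebra (this is the Popa--Vaes machinery), and this map preserves the corner at the trivial object, which is the fusion algebra. In the GNS representation of the canonical trace ($\tau(\lambda)=\delta_{\lambda,0}$, which under the character identification is integration against the Weyl measure on $T/W$), the norm closure of the fusion algebra is exactly $C(T/W)$ --- the noncompact admissible points simply do not occur in this representation --- and the induced map is $\chi_\lambda\mapsto\omega(\lambda)\chi_\lambda$, bounded on $C(T/W)$. Composing with evaluation at $e\in T$, where $\chi_\lambda(e)=\dim V(\lambda)$, gives the bounded functional $\chi_\lambda\mapsto\dim V(\lambda)\,\omega(\lambda)$, and Riesz representation produces your measure on $T/W$ with the classical normalization; from there your last paragraph applies verbatim. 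Note also that this route uses only the fusion ring of $\cC$ (plus the cb-multiplier/tube-algebra correspondence), whereas any argument through $\uniC$ would require category-specific knowledge of the admissible spectrum, which genuinely differs among the categories sharing the fusion rules of $K$.
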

	\begin{proof}
	Let $\omega: P_+ \to \mathbb{C}$ be a completely bounded multiplier on $\mathcal{C}$. Then $\omega$ gives rise to a normal completely bounded map $T : \tube'' \to \tube''$, where $\tube$ is the tube algebra. We restrict $T$ to the fusion algebra in order to get a completely bounded map
	\[
	  T \colon C(T/W) \to C(T/W), \chi_\lambda \mapsto \omega(\lambda) \chi_\lambda.
	\]
	After composing this with the evaluation map at the neutral element in $T$, we obtain a bounded functional
	\[
	  \tilde{\omega} \colon C(T/W) \to \mathbb{C} \colon \chi_\lambda \mapsto \dim (V(\lambda))  \omega(\lambda).
	\]
	Since $\tilde{\omega}$ is a bounded functional on $C(T/W)$, there exists a finite measure $\mu$ on $T/W$ such that
	\[
	  \omega(\lambda) = \frac{1}{\dim(V(\lambda))} \int_{t \in T/W} \chi_\lambda(t) d\mu(t).
	\]
	Put $c(t) = \mu(\{t\})$. By Proposition \ref{prop:characterconvergence} and the dominated convergence theorem, we obtain that
	\[
	  \omega(\lambda) - \sum_{t \in Z(K)} c(t) t^\lambda \to 0 \qquad \textrm{as} \qquad \lambda \to \infty.
	\]
	\end{proof}
Obvious examples of categories $\mathcal{C}$ satisfying the conditions of Theorem \ref{thm:generalhm} are the representation categories of $q$-deformations of connected compact simple Lie groups, where $q \in (0,1]$. If, moreover, the Lie group has trivial center, then the result directly implies Theorem \ref{thm:hm}.
\begin{remark}
It follows directly from the proof that Theorem \ref{thm:generalhm} also holds for $\mathrm{Rep}(\mathrm{SU}_q(2))$ for $q \in (-1,0)$. Since every free orthogonal quantum group $O_F^+$ is monoidally equivalent to $\mathrm{SU}_q(2)$ (and hence has the same representation category) for some uniquely determined $q \in [-1,0) \cup (0,1]$, the representation categories of free orthogonal quantum groups also satisfy the conditions of Theorem \ref{thm:generalhm}. Other examples of rigid $C^{\ast}$-tensor categories with the same fusion ring as some connected compact simple Lie group are the Kazhdan-Wenzl categories \cite{kazhdanwenzl} (see also \cite{jordans}).
\end{remark}

\section{A characterization of central states on $\mathrm{SU}_q(N)$} \label{sec:thmb}
Let $K$ be a connected simply connected compact simple Lie group, and take its complexification $G = K_\mathbb{C}$, which has Iwasawa decomposition $G=KAN$. Let $\mathfrak{g}$ be the Lie algebra of $G$, and let $\mathfrak{h}$ be a Cartan subalgebra of $\mathfrak{g}$. Let $K_q$ be a $q$-deformation of $K$, where $q \in (0,1)$. Recall the following definition from \cite{arano2}.
\begin{definition}
  We say that $\nu \in \fh^*$ is almost real (with respect to $q$) if $({\rm Im}(\nu),\alpha) < 2\pi \log(q)^{-1}$ for all $\alpha \in \Delta$.
\end{definition}
Recall the classification of extremal positive-definite functions of $G$. The spherical admissible dual of $G$ is homeomorphic to $\fh^*/W$ and the Berezin--Harish--Chandra formula (see \cite[Theorem 5.7]{helgason})  (with induction parameter $\frac{1}{2}\nu$) asserts that the corresponding $K$-bi-invariant function for $\nu \in \fh^*$ is
\[
  \varphi^\nu_1(q^\mu) = \frac{\chi_{\frac{1}{2}\nu-\rho}(q^\mu)}{\chi_{\frac{1}{2}\nu-\rho}(1)},
\]
where $\mu \in \fh^*_\mathbb{R}$, the element $q^\mu$ is an element in $A$, the weight $\rho$ is half of the sum of the positive roots (which equals the sum of the fundamental weights), and $\chi_\nu$ is the analytic continuation of the Weyl character formula:
\[
  \chi_\nu(q^\mu) = \frac{A_{\nu+\rho}(q^\mu)}{A_\rho(q^\mu)}, \quad \textrm{where} \quad A_\nu(q^\mu) = \sum_{w \in W} (-1)^{l(w)} q^{(\mu,w \nu)}.
\]
Note that this formula holds for every (non-unitary) spherical principal series representation, since both sides of the equation are analytic. The unitary spherical dual (equivalently, the set of zonal spherical functions) is parametrized by $\nu \in \fh^*$ such that $\varphi^\nu_1$ is positive-definite.

Recall that a central state $\omega$ on $C(K_q)$ is called extremal if $\omega$ is a extremal point in the set of the central states. The following theorem follows from \cite{arano2}.
\begin{theorem}\label{thm:arano}
For $q \in (0,1]$, we have the following:
\begin{enumerate}[(i)]
	\item the set of extremal central states on $K_q$ is parametrized by $\nu \in \fh^*/2\pi i \log(q)^{-1} Q^\vee \rtimes W$ such that
	\[
	  \varphi^\nu_q(\lambda) = \frac{{\chi}_\lambda(q^{\nu})}{{\chi}_\lambda(q^{2\rho})}
	\]
	is positive-definite;
	\item if $\nu \in \fh^*$ is almost real, then $\varphi^\nu_q$ is positive-definite if and only if $\varphi^\nu_1$ is;
	\item if $K=SU(N)$, then for all $\nu \in \fh^*$, we have $\chi \in 2 \pi i \log(q)^{-1} P^\vee$ such that $\nu - \chi$ is almost real with respect to $q$.
\end{enumerate}
\end{theorem}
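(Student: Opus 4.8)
The plan is to deduce the three statements from the analysis of the unitary spherical representation theory of the Drinfel'd double $D(K_q)$ carried out in \cite{arano2}, after first translating the notion of an extremal central state on $K_q$ into the language of that theory. The bridge is the observation that a central state on $\cO(K_q)$ is a central positive-definite functional, hence, via the GNS construction applied to the associated admissible representation of the fusion algebra, corresponds to a cyclic representation of the tube algebra $\tube$ of $\Rep(K_q)$ with a distinguished invariant vector, extremality corresponding to irreducibility. Since the admissible representations of $\tube$ are exactly the unitary representations of the Drinfel'd double $D(K_q)$, the extremal central states correspond to the irreducible \emph{spherical} unitary representations of $D(K_q)$, which are the central object of study in \cite{arano2}.

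For part (i), I would read off the parametrization directly from the classification of spherical unitary $D(K_q)$-representations in \cite{arano2}: each irreducible one carries a parameter $\nu \in \fh^*$, and its spherical function, written on $\Irr(\Rep(K_q)) = P_+$, is precisely the analytic continuation $\varphi^\nu_q(\lambda) = \chi_\lambda(q^\nu)/\chi_\lambda(q^{2\rho})$ of the quantum Weyl character formula recalled above; the functional it defines is a state exactly when $\varphi^\nu_q$ is positive-definite. It then remains to identify the redundancy in $\nu$. The Weyl group $W$ acts trivially because $\chi_\lambda(q^\nu)$ is $W$-invariant in $\nu$ (both numerator and denominator of the character ratio pick up the same sign $(-1)^{l(w_0)}$ under $\nu \mapsto w_0\nu$), and $\nu$ and $\nu + \tfrac{2\pi i}{\log q}\eta$ give the same function on every $\chi_\lambda$ iff $(\mu,\eta)\in\mathbb{Z}$ for all $\mu \in P$, i.e.\ iff $\eta \in Q^\vee$, the lattice dual to $P$. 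This yields the stated parametrization by $\fh^*/(\tfrac{2\pi i}{\log q}Q^\vee \rtimes W)$.

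Part (ii) is the heart of the matter and is exactly the comparison theorem of \cite{arano2}. I would match $\varphi^\nu_q$ with the Berezin--Harish--Chandra spherical function $\varphi^\nu_1$ on $G=K_\C$ recalled above, both being analytic continuations of the same Weyl-type ratio with induction parameter $\tfrac12\nu$. Under the almost real constraint $(\mathrm{Im}(\nu),\alpha) < 2\pi\log(q)^{-1}$, the construction of \cite{arano2} produces an identification of the spherical $D(K_q)$-module at parameter $\nu$ with the spherical principal series module of $G$ at the same parameter, carrying the invariant Hermitian forms to one another; consequently one form is positive semidefinite precisely when the other is, which is the asserted equivalence of positive-definiteness of $\varphi^\nu_q$ and $\varphi^\nu_1$. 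I expect this to be the main obstacle: it is not an elementary manipulation but relies on the full analytic and categorical comparison of \cite{arano2}, the almost real hypothesis being exactly what keeps the two module structures in the regime where the comparison is valid.

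Finally, part (iii) is an elementary lattice reduction that uses the special structure of type $A_{N-1}$. Since $\tfrac{2\pi i}{\log q}$ is purely imaginary, subtracting $\chi \in \tfrac{2\pi i}{\log q}P^\vee$ changes only $\mathrm{Im}(\nu)$, by an element of $\tfrac{2\pi}{\log q}P^\vee$. Writing $\mathrm{Im}(\nu)$ in the $e_i$-coordinates for $SU(N)$, the coweight lattice $P^\vee$ shifts the individual coordinates independently by integer multiples of $\tfrac{2\pi}{\log q}$ (because $P^\vee$ is dual to the root lattice, with $(\varpi_i^\vee,\alpha_j)=\delta_{ij}$), so one may reduce each coordinate into a single period and thereby force every pairing $(\mathrm{Im}(\nu-\chi),\alpha)$ with $\alpha = e_i - e_j$ into a fundamental domain of width $\tfrac{2\pi}{|\log q|}$. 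For type $A$ this is exactly the almost real bound. The only subtlety is that one must use the full coweight lattice $P^\vee$, rather than the coarser $Q^\vee$ appearing in (i), to have enough translates to reduce all roots simultaneously; shifting by $P^\vee/Q^\vee$ changes the underlying central state, which is precisely the source of the sum over $\widehat{P/Q}$ in the intended application.
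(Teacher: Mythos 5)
Your proposal is correct and follows essentially the same route as the paper: identify (extremal) central states on $\cO(K_q)$ with (irreducible) spherical unitary representations of the Drinfel'd double — the paper does this via \cite[Theorem 29]{decommerfreslonyamashita} rather than the tube-algebra/admissible-representation picture, but these are equivalent bridges — and then invoke the classification and comparison results of \cite{arano1} and \cite{arano2} for (i), (ii), (iii). Your added details (the $Q^\vee \rtimes W$ redundancy computation in (i) and the type-$A$ coordinate reduction modulo $2\pi i\log(q)^{-1}P^\vee$ for (iii)) are accurate reconstructions of the cited lemmas, including the key point that $P^\vee$ rather than $Q^\vee$ is needed, which the paper handles by citing \cite[Corollary 3.6]{arano1} and \cite[Lemma 2.5]{arano2} respectively.
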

\begin{proof}
Recall that by \cite[Theorem 29]{decommerfreslonyamashita}, the central states are in one-to-one correspondence with the $K_q$-invariant states on the quantum codouble $C_0^u(\hat{\mathcal{D}}K_q)$, and hence unitary representations with a $K_q$-invariant vector. Then we obtain (i) from \cite[Corollary 3.6]{arano1}, (ii) from \cite[Theorem 4.9]{arano2} and (iii) from \cite[Lemma 2.5]{arano2}.
\end{proof}
\begin{proposition} For $q \in (0,1]$, we have
  \[
    \varphi^\nu_q(\lambda) = \frac{\varphi^\nu_1(q^{2\lambda+2\rho})}{\varphi^\nu_1(q^{2\rho})}.
  \]
\end{proposition}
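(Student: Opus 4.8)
The plan is to write both sides entirely in terms of the alternating sums
\[
  A_\sigma(q^\tau) = \sum_{w \in W} (-1)^{l(w)} q^{(\tau, w\sigma)}
\]
from the analytically continued Weyl character formula, and to reduce the identity to elementary manipulations of these sums. The engine of the argument is the reciprocity
\[
  A_\sigma(q^\tau) = A_\tau(q^\sigma), \qquad \sigma, \tau \in \fh^*,
\]
which I would prove in one line: since $W$ preserves the form $(\cdot,\cdot)$, one has $(\tau, w\sigma) = (w^{-1}\tau, \sigma)$, and relabelling $w \mapsto w^{-1}$ (legitimate because $l(w^{-1}) = l(w)$) turns $A_\sigma(q^\tau)$ into $A_\tau(q^\sigma)$.

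First I would treat the right-hand side. Substituting the Berezin--Harish--Chandra formula, the normalizing constants $\chi_{\frac{1}{2}\nu - \rho}(1)$ cancel, giving
\[
  \frac{\varphi_1^\nu(q^{2\lambda+2\rho})}{\varphi_1^\nu(q^{2\rho})} = \frac{\chi_{\frac{1}{2}\nu-\rho}(q^{2\lambda+2\rho})}{\chi_{\frac{1}{2}\nu-\rho}(q^{2\rho})},
\]
and expanding each character via $\chi_{\frac{1}{2}\nu-\rho}(q^\mu) = A_{\frac{1}{2}\nu}(q^\mu)/A_\rho(q^\mu)$ reduces the right-hand side to a ratio of four $A$-terms.

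The key step is to identify these four terms with the ones occurring in $\varphi_q^\nu(\lambda) = \chi_\lambda(q^\nu)/\chi_\lambda(q^{2\rho})$. Using reciprocity together with the bilinearity trick $(2(\lambda+\rho), \tfrac{1}{2} w\nu) = (\lambda+\rho, w\nu)$, I would establish the three identities
\[
  A_{\frac{1}{2}\nu}(q^{2\lambda+2\rho}) = A_{\lambda+\rho}(q^\nu), \quad A_{\frac{1}{2}\nu}(q^{2\rho}) = A_\rho(q^\nu), \quad A_\rho(q^{2\lambda+2\rho}) = A_{\lambda+\rho}(q^{2\rho}).
\]
Feeding these into the ratio and regrouping produces exactly
\[
  \frac{A_{\lambda+\rho}(q^\nu)/A_\rho(q^\nu)}{A_{\lambda+\rho}(q^{2\rho})/A_\rho(q^{2\rho})} = \frac{\chi_\lambda(q^\nu)}{\chi_\lambda(q^{2\rho})} = \varphi_q^\nu(\lambda),
\]
which is the claim.

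The computation is short, and the only points requiring care are bookkeeping ones: matching the factor-of-two scalings produced by the induction parameter $\tfrac{1}{2}\nu$ against the arguments $q^{2\lambda+2\rho}$ and $q^{2\rho}$, and ensuring that the manipulations take place where the Weyl denominator $A_\rho$ does not vanish, so that all ratios are defined. Since both sides are analytic in $\nu$, establishing the identity on the generic locus suffices and no genuine obstacle arises. As a sanity check, both sides equal $1$ at $\lambda = 0$ (the trivial representation), consistent with the spherical normalization.
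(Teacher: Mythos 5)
Your proof is correct and is essentially the paper's own argument: both expand $\varphi^\nu_q$ and $\varphi^\nu_1$ via the (analytically continued) Weyl character formula into ratios of the alternating sums $A_\sigma(q^\tau)$, then use the reciprocity $A_\sigma(q^\tau) = A_\tau(q^\sigma)$ together with the bilinear rescaling $(2\lambda+2\rho, \tfrac{1}{2}w\nu) = (\lambda+\rho, w\nu)$ to match the four terms. The only differences are cosmetic: you compute from the right-hand side toward the left while the paper goes the other way, and you spell out the one-line proof of reciprocity ($W$-invariance of the form plus $l(w^{-1})=l(w)$) that the paper leaves implicit.
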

\begin{proof}
From the Weyl character formula, it follows that
\[
  \varphi^\nu_q(\lambda) = \frac{A_{\lambda+\rho}(q^\nu)}{A_\rho(q^\nu)} \frac{A_{\rho}(q^{2\rho})}{A_{\lambda+\rho}(q^{2\rho})}.
\]
Using that $A_\nu(q^\mu) = A_\mu(q^\nu)$, we compute that
\[
  \varphi^\nu_q(\lambda) = \frac{A_{\frac{1}{2}\nu}(q^{2\lambda+2\rho})}{A_{\rho}(q^{2\lambda+2\rho})} \frac{A_{\rho}(q^{2\rho})}{A_{\frac{1}{2}\nu}(q^{2\rho})} =  \frac{\varphi^\nu_1(q^{2\lambda+2\rho})}{\varphi^\nu_1(q^{2\rho})}.
\]
\end{proof}
Just as the Weyl character formula, the above formula should be viewed as a ``formal'' formula, which literally holds only for generic $\nu$. (For special $\nu$, we might get $0/0$.) Both the numerator and the denominator are analytic functions in the variable $\nu$, and we take the analytic extension of the left hand side in general. Thus we can compute the zeroes of the denominator.
\begin{proposition} \label{prp:zeroes}
  For all $\nu \in \fh^*$, we have $\varphi^\nu_1(q^{2 \rho}) = 0$ if and only if there exists $\alpha \in \Delta_+$ such that $(\nu,\alpha^\vee) \in 2 \pi i \log(q_\alpha)^{-1}  \mathbb{Z} \setminus \{0\}$.
\end{proposition}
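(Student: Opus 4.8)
The plan is to compute $\varphi^\nu_1(q^{2\rho})$ explicitly as an analytic function of $\nu$ and to factor it as a finite product over the positive roots, where each factor governs exactly one of the conditions $(\nu,\alpha^\vee)\in 2\pi i\log(q_\alpha)^{-1}\mathbb{Z}$. First I would unfold the definitions. Since $\chi_{\frac12\nu-\rho}(q^\mu) = A_{\frac12\nu}(q^\mu)/A_\rho(q^\mu)$, the numerator of $\varphi^\nu_1(q^{2\rho})$ is $A_{\frac12\nu}(q^{2\rho})/A_\rho(q^{2\rho})$, while the denominator $\chi_{\frac12\nu-\rho}(1)$ equals, by the Weyl dimension formula (the $q^\mu\to 1$ limit, with highest weight $\tfrac12\nu-\rho$ so that $\lambda+\rho=\tfrac12\nu$), the product $\prod_{\alpha\in\Delta_+}(\tfrac12\nu,\alpha)/(\rho,\alpha)$.

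The key simplification is of the numerator $A_{\frac12\nu}(q^{2\rho})$. Using $(2\rho,\tfrac12 w\nu)=(\rho,w\nu)$ one gets $A_{\frac12\nu}(q^{2\rho}) = \sum_{w\in W}(-1)^{l(w)}q^{(\rho,w\nu)} = A_\nu(q^\rho)$, and by the symmetry $A_a(q^b)=A_b(q^a)$ already used above this equals $A_\rho(q^\nu)$. The Weyl denominator formula then turns it into $\prod_{\alpha\in\Delta_+}(q^{(\nu,\alpha)/2}-q^{-(\nu,\alpha)/2})$, and rewriting each factor via $(\nu,\alpha)=\tfrac{(\alpha,\alpha)}{2}(\nu,\alpha^\vee)$ together with $q_\alpha=q^{(\alpha,\alpha)/2}$ gives $q_\alpha^{(\nu,\alpha^\vee)/2}-q_\alpha^{-(\nu,\alpha^\vee)/2}$. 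Since $A_\rho(q^{2\rho})=\prod_{\alpha\in\Delta_+}(q^{(\rho,\alpha)}-q^{-(\rho,\alpha)})$ is a nonzero constant independent of $\nu$ (each factor is nonzero because $(\rho,\alpha)>0$ and $q\in(0,1)$), assembling the three ingredients yields
\[
  \varphi^\nu_1(q^{2\rho}) = C\prod_{\alpha\in\Delta_+}\frac{q_\alpha^{(\nu,\alpha^\vee)/2}-q_\alpha^{-(\nu,\alpha^\vee)/2}}{(\nu,\alpha^\vee)},
\]
where $C$ is a nonzero constant not depending on $\nu$ (absorbing $A_\rho(q^{2\rho})^{-1}$ and the factors $(\rho,\alpha)$ and $\tfrac{(\alpha,\alpha)}{2}$).

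It then remains to read off the zero set. Writing $z=(\nu,\alpha^\vee)$, each factor $g_\alpha(z)=(q_\alpha^{z/2}-q_\alpha^{-z/2})/z$ extends to an entire function of $z$, and hence of $\nu$; the apparent pole at $z=0$ is removable with value $g_\alpha(0)=\log q_\alpha\neq 0$. For $z\neq 0$ one has $g_\alpha(z)=0$ if and only if $q_\alpha^z=1$, i.e.\ $z\log q_\alpha\in 2\pi i\mathbb{Z}$, i.e.\ $z\in 2\pi i\log(q_\alpha)^{-1}\mathbb{Z}\setminus\{0\}$. Because the displayed expression is a finite product of such entire factors times the nonzero constant $C$, it vanishes precisely when at least one factor vanishes, which is exactly the asserted equivalence.

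The main obstacle, and the reason for the exclusion of $0$ in the statement, is the $0/0$ cancellation at $(\nu,\alpha^\vee)=0$: there both the numerator factor $q_\alpha^{z/2}-q_\alpha^{-z/2}$ and the Weyl-dimension denominator $(\tfrac12\nu,\alpha)$ vanish (to first order along the wall), so one must pass to the analytic continuation — equivalently, to the removable singularity of $g_\alpha$, whose value $\log q_\alpha$ is nonzero — in order to conclude that $\varphi^\nu_1(q^{2\rho})$ is in fact \emph{nonzero} at such $\nu$. The factored form above makes this cancellation manifest and shows the function is genuinely entire in $\nu$; the remaining steps (the two Weyl formulas and the bookkeeping of the constant $C$) are routine.
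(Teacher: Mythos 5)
Your proposal is correct and follows essentially the same route as the paper: both unfold $\varphi^\nu_1(q^{2\rho})$ via the Weyl dimension formula, use the symmetry $A_{\frac12\nu}(q^{2\rho})=A_\rho(q^\nu)$ together with the Weyl denominator formula, and observe that the order-one zeros of numerator and denominator along the walls $(\nu,\alpha)=0$ cancel. Your per-root factorization into the entire functions $g_\alpha(z)=(q_\alpha^{z/2}-q_\alpha^{-z/2})/z$ is just a cleaner packaging of the paper's zero-counting argument (and makes the cancellation at the walls slightly more explicit), but it is not a different method.
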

\begin{proof}
Recall that
\[
  \varphi^\nu_1(q^{2 \rho}) = \frac{\chi_{\frac{1}{2}\nu - \rho}(q^{2\rho})}{\chi_{\frac{1}{2}\nu-\rho}(1)} = \frac{A_{\frac{1}{2}\nu}(q^{2\rho})}{A_\rho(q^{2\rho})} \frac{\prod_{\alpha \in \Delta_+}(\rho,\alpha)}{\prod_{\alpha \in \Delta_+} (\frac{1}{2}\nu,\alpha)},
\]
where we have used the Weyl dimension formula for the computation of the denominator. It follws that
\begin{itemize}
  \item the scalars $\prod_{\alpha \in \Delta_+}(\rho,\alpha)$ and $A_\rho(q^{2\rho})$ are nonzero;
  \item the function $\prod_{\alpha \in \Delta_+} (\frac{1}{2}\nu,\alpha)$ has a zero whenever $(\nu,\alpha) = 0$, and each zero has order $1$;
  \item and from the Weyl denominator formula, we obtain
    \[
      A_{\frac{1}{2}\nu}(q^{2\rho}) = A_\rho(q^\nu) = q^{(\rho,\nu)} \prod_{\alpha \in \Delta_+} (1 - q^{-(\alpha,\nu)} ),
    \]
    which has a zero whenever $(\nu,\alpha) \in 2 \pi i \log(q)^{-1} \mathbb{Z}$, and each zero has order $1$.
\end{itemize}
Combining these, we get the desired conclusion.
\end{proof}
\begin{corollary}\label{cor:positive}
  Let $\nu \in \fh^*$ be an almost real weight such that $\varphi^\nu_1$ is self-adjoint. Then we have $\varphi^\nu_1(q^{2\rho}) > 0$.
\end{corollary}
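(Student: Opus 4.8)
The plan is to read off the zero set of $\varphi^\nu_1(q^{2\rho})$ from Proposition \ref{prp:zeroes}, extract realness from self-adjointness, and then fix the sign by a one-parameter deformation that scales away the imaginary part of $\nu$. Throughout, write $\nu = {\rm Re}(\nu) + i\,{\rm Im}(\nu)$ with ${\rm Re}(\nu), {\rm Im}(\nu) \in \fh^*_\mathbb{R}$, and regard $F(\nu) := \varphi^\nu_1(q^{2\rho})$ as the analytic function of $\nu$ given by the formula in the proof of Proposition \ref{prp:zeroes}. Two structural features of $F$ will do the work: it has real coefficients, in the sense that $\overline{F(\nu)} = F(\bar\nu)$ (because $q$ and $\rho$ are real), and it is $W$-invariant, since both $A_\rho(q^\nu)$ and $\prod_{\alpha \in \Delta_+}(\tfrac12\nu,\alpha)$ transform under $\nu \mapsto w\nu$ by the sign character $(-1)^{l(w)}$, so their quotient is unchanged.

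First I would settle nonvanishing. By Proposition \ref{prp:zeroes}, $F(\nu) = 0$ forces $(\nu,\alpha^\vee) \in 2\pi i \log(q_\alpha)^{-1}\mathbb{Z} \setminus \{0\}$ for some $\alpha \in \Delta_+$, equivalently $(\nu,\alpha) \in 2\pi i \log(q)^{-1}\mathbb{Z}\setminus\{0\}$; taking imaginary parts, this requires $({\rm Im}(\nu),\alpha)$ to be a nonzero integer multiple of $2\pi \log(q)^{-1}$, hence of absolute value at least $2\pi|\log(q)|^{-1}$. The almost-real hypothesis on $\nu$ bounds $|({\rm Im}(\nu),\alpha)|$ strictly below this threshold and so rules it out, giving $F(\nu) \neq 0$. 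The same estimate applies verbatim to any weight whose imaginary part is a scalar multiple of ${\rm Im}(\nu)$, which is precisely what I will need along the deformation.

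Next I would extract realness. The parametrization of spherical functions gives that $\varphi^\nu_1$ is self-adjoint exactly when $\bar\nu \in W\nu$; writing $\bar\nu = w\nu$ and separating real and imaginary parts yields $w\,{\rm Re}(\nu) = {\rm Re}(\nu)$ and $w\,{\rm Im}(\nu) = -{\rm Im}(\nu)$. Combined with the two features above, $\overline{F(\nu)} = F(\bar\nu) = F(w\nu) = F(\nu)$, so $F(\nu)$ is real. The crucial point is that the very same $w$ witnesses self-adjointness for every weight on the segment $\nu_s := {\rm Re}(\nu) + i s\,{\rm Im}(\nu)$, $s \in [0,1]$, so $F(\nu_s)$ stays real along this path; and since $({\rm Im}(\nu_s),\alpha) = s\,({\rm Im}(\nu),\alpha)$, each $\nu_s$ is again almost real, whence $F(\nu_s) \neq 0$ for all $s$ by the previous step.

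Finally I would pin the sign at the real endpoint and transport it. At $s = 0$ the weight $\mu := {\rm Re}(\nu)$ is real, and writing $F(\mu) = \tfrac{q^{(\rho,\mu)}}{q^{2(\rho,\rho)}} \prod_{\alpha \in \Delta_+} \tfrac{(1 - q^{-(\alpha,\mu)})(\rho,\alpha)}{(1 - q^{-2(\rho,\alpha)})\,\tfrac12(\mu,\alpha)}$ one checks each factor is positive: the function $z \mapsto (1 - q^{-z})/z$ is strictly negative for all real $z$ (with value $\log q < 0$ at $z = 0$), while $\tfrac{(\rho,\alpha)}{1 - q^{-2(\rho,\alpha)}} < 0$ because $q \in (0,1)$ and $(\rho,\alpha) > 0$, so the two negative contributions cancel in sign and the prefactor is positive. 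Hence $F(\mu) > 0$. Since $s \mapsto F(\nu_s)$ is continuous, real-valued, and nowhere vanishing on $[0,1]$, its sign is constant, so $F(\nu) = F(\nu_1)$ has the same sign as $F(\nu_0) = F(\mu) > 0$, which is the claim. The one genuinely delicate point is the realness transfer along the homotopy: it rests on the Weyl-orbit description of self-adjointness and on the observation that a single Weyl element $w$ certifies self-adjointness simultaneously for every $\nu_s$; this is what keeps $F$ real — not merely nonzero — along the deformation, and hence what allows the sign at the real endpoint to be propagated back to $\nu$.
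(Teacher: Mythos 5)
Your overall strategy is the same as the paper's (deform $\mathrm{Im}(\nu)$ to zero through almost real self-adjoint parameters, use Proposition \ref{prp:zeroes} for nonvanishing, and transport the sign by continuity), and two of your steps are correct and even slightly stronger than what the paper writes: the nonvanishing estimate is exactly right, and your explicit product-formula computation showing $F(\mu)>0$ at every \emph{real} $\mu$ is a legitimate replacement for the paper's second homotopy from $\mathrm{Re}(\nu)$ to $2\rho$ (where the paper simply evaluates $\varphi^{2\rho}_1(q^{2\rho})=1$). The genuine error is your characterization of self-adjointness, which you yourself identify as the crux. In the parametrization used here, conjugation sends the parameter $\nu$ to $\bar\nu$, but inversion sends it to $-\nu$ (up to $W$): one has $\overline{\varphi^\nu_1}=\varphi^{\bar\nu}_1$ and $\varphi^\nu_1(\,\cdot\,^{-1})=\varphi^{-\nu}_1$. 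Hence $\varphi^\nu_1$ is self-adjoint iff $\bar\nu\in W(-\nu)$, i.e.\ iff $\nu=-w\bar\nu$ for some $w\in W$ --- with a minus sign, as the paper states --- and \emph{not} iff $\bar\nu\in W\nu$. The two conditions genuinely differ whenever $-\mathrm{id}\notin W$, e.g.\ for $K=\mathrm{SU}(N)$ with $N\geq 3$: a generic purely imaginary $\nu$ gives a unitary principal series spherical function, which is positive-definite and hence self-adjoint, yet $\bar\nu=-\nu\notin W\nu$. Consequently your realness chain $\overline{F(\nu)}=F(\bar\nu)=F(w\nu)=F(\nu)$ is unjustified: the correct premise only yields $\overline{F(\nu)}=F(-w\nu)$, and $W$-invariance alone does not close this. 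The relations you extract by separating parts are also wrong the way you state them; from $\bar\nu=-w\nu$ one gets $\mathrm{Re}(\nu)=-w\,\mathrm{Re}(\nu)$ and $\mathrm{Im}(\nu)=w\,\mathrm{Im}(\nu)$, not the fixed/anti-fixed pattern you wrote.

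The gap is fixable and your skeleton survives. First, with the corrected condition the same $w$ still certifies self-adjointness along your segment: if $\bar\nu=-w\nu$, then $-w\nu_s=-w\,\mathrm{Re}(\nu)-isw\,\mathrm{Im}(\nu)=\mathrm{Re}(\nu)-is\,\mathrm{Im}(\nu)=\bar\nu_s$, so $\varphi^{\nu_s}_1$ is self-adjoint for all $s\in[0,1]$. Second, you must supply the additional symmetry $F(-\mu)=F(\mu)$, after which $\overline{F(\nu_s)}=F(-w\nu_s)=F(\nu_s)$ and your continuity argument runs as before. This symmetry is precisely the paper's opening observation: since $w_0(2\rho)=-2\rho$, the points $q^{2\rho}$ and $q^{-2\rho}$ lie in the same $K$-orbit, so $F(-\mu)=\varphi^{-\mu}_1(q^{2\rho})=\varphi^{\mu}_1(q^{-2\rho})=\varphi^{\mu}_1(q^{2\rho})=F(\mu)$; equivalently, realness of $F(\nu_s)$ follows directly by evaluating the self-adjointness identity $\overline{\varphi(g)}=\varphi(g^{-1})$ at $g=q^{2\rho}$ and using bi-invariance. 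With that one ingredient inserted, your proof is correct, and it is essentially the paper's proof with a computational endpoint evaluation in place of the paper's second homotopy.
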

\begin{proof}
Since $q^{2\rho}$ and $q^{-2\rho}$ lie in the same $K$-orbit, we know that $\varphi^\nu_1(q^{2\rho})$ is real. We only need to show that the set $X$ of almost real weights $\nu$ such that $\varphi^\nu_1$ is self-adjoint is connected. Once one shows this, since $\varphi^\nu_1(q^{2\rho})$ is nonzero, we only need to check the positivity at an arbitrary point in $X$, say $\nu = 2\rho$. In this case $\varphi^{2 \rho}_1(q^{2\rho}) = 1$, which concludes the proof.

To show the claim, we may assume that ${\rm Re}(\nu)$ is dominant, that is to say, ${\rm Re}(\nu)$ lies in the closure of the fundamental Weyl chamber. Note that $\varphi^\nu_1$ is self-adjoint if and only if there exists $w \in W$ such that $\nu= - w \overline{\nu}$. Hence $\nu_t = {\rm Re}(\nu) + it {\rm Im}(\nu)$ is a homotopy in $X$ that connects $\nu$ and $\rm{Re}(\nu)$. On the other hand, the equation $\nu= - w \overline{\nu}$ shows that ${\rm Re}(\nu) = -w_0 {\rm Re}(\nu)$, where $w_0$ is the longest element of $W$. So the straight line connecting ${\rm Re}(\nu)$ and $2\rho$ is a homotopy in $X$.
\end{proof}
For $0<q\leq 1$, let $Z_q$ be the unitary spherical dual of $G_q$ inside $\fh^*/2\pi i\log(q)^{-1} Q^\vee \rtimes W$. We make the identification $2\pi i \log(q)^{-1} (P^\vee / Q^\vee) \simeq \widehat{P/Q}$.
\begin{lemma}\label{lem:decomposition}
Let $K=SU(N)$. Then we have a decomposition
\[
  Z_q = \coprod_{\chi \in \widehat{P/Q}} Z^\chi_q
\]
with the following properties:
\begin{itemize}
  \item if $\nu \in Z_q^\chi$, then $\nu-\chi \in Z_1$,
  \item the function $\nu \mapsto \frac{1}{\varphi^1_{\nu-\chi}(q^{2\rho})}$ is positive and bounded on $Z_q^\chi$.
\end{itemize}
\end{lemma}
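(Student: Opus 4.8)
The plan is to sort the points of $Z_q$ by the character of $\widehat{P/Q}$ needed to push them into the almost-real region, and then to transport positive-definiteness from $K_q$ to the complexification $SL(N,\mathbb{C})$ via Theorem \ref{thm:arano}(ii). The computational heart of the argument is a twisting identity. Writing an element of $2\pi i\log(q)^{-1}P^\vee$ as $\chi = 2\pi i\log(q)^{-1}\xi$ with $\xi\in P^\vee$, I would first observe that $(\xi, w\beta)\equiv(\xi,\beta)\pmod{\mathbb{Z}}$ for every $\beta\in P$ and $w\in W$, because $w^{-1}\xi-\xi\in Q^\vee$ and $(Q^\vee,P)\subseteq\mathbb{Z}$. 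Feeding this into the analytically continued Weyl character formula gives $A_\beta(q^{\nu+\chi}) = e^{2\pi i(\xi,\beta)}A_\beta(q^\nu)$ for $\beta\in P$, so that the $\rho$-contributions cancel in the quotient and
\[
  \varphi^{\nu+\chi}_q(\lambda) = \chi(\lambda)\,\varphi^\nu_q(\lambda),\qquad \chi(\lambda) = e^{2\pi i(\xi,\lambda)}\in\widehat{P/Q}.
\]
I would then define $Z_q^\chi$ to consist of those $\nu\in Z_q$ whose (unique) almost-real representative is reached by subtracting a shift lying in the class $\chi$. Since the almost-real region is a fundamental domain for the translation action of $2\pi i\log(q)^{-1}P^\vee$ (existence of the shift being Theorem \ref{thm:arano}(iii)) and $W$ acts trivially on $P^\vee/Q^\vee\simeq\widehat{P/Q}$, this class depends only on $\nu$, yielding the disjoint decomposition $Z_q=\coprod_\chi Z_q^\chi$.

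For the first bullet point, let $\nu\in Z_q^\chi$, so that $\varphi^\nu_q$ is positive-definite. By the twisting identity, $\varphi^{\nu-\chi}_q=\overline\chi\,\varphi^\nu_q$, and since multiplication by a one-dimensional character preserves positive-definiteness, $\varphi^{\nu-\chi}_q$ is positive-definite as well. As $\nu-\chi$ is almost real, Theorem \ref{thm:arano}(ii) upgrades this to positive-definiteness of $\varphi^{\nu-\chi}_1$; that is, $\nu-\chi\in Z_1$.

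Positivity in the second bullet point is then immediate: $\varphi^{\nu-\chi}_1$ is positive-definite, hence self-adjoint, and $\nu-\chi$ is almost real, so Corollary \ref{cor:positive} gives $\varphi^{\nu-\chi}_1(q^{2\rho})>0$. The boundedness is the step I expect to be the main obstacle, as it requires a \emph{uniform} positive lower bound for $\varphi^{\nu-\chi}_1(q^{2\rho})$ over all of $Z_q^\chi$. I would establish it by a compactness argument. Writing $\mu=\nu-\chi$, the parameters in question lie in $Z_1$ and are almost real; their imaginary parts are confined to the relatively compact almost-real fundamental domain, and their real parts are bounded because positive-definiteness forces $\varphi^\mu_1$ to be dominated on $A$ by its value $1$ at the identity, which confines $\mathrm{Re}(\mu)$ to a bounded region (equivalently, the spherical complementary series of $SL(N,\mathbb{C})$ is bounded). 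Hence this parameter set has compact closure.

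It then remains to see that $\varphi^\mu_1(q^{2\rho})$ stays bounded away from $0$ on this closure. Here I would invoke Proposition \ref{prp:zeroes}: the continuous function $\mu\mapsto\varphi^\mu_1(q^{2\rho})$ vanishes exactly when some $(\mu,\alpha^\vee)$ meets $2\pi i\log(q_\alpha)^{-1}\mathbb{Z}\setminus\{0\}$, and these points lie strictly outside the closed almost-real region. Thus $\varphi^\mu_1(q^{2\rho})$ is continuous, real, and nowhere zero on the compact closure, and positive there by the connectedness argument used in Corollary \ref{cor:positive}. A continuous, strictly positive function on a compact set is bounded below by a positive constant, which yields the uniform bound and completes the proof. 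The delicate point throughout is the interplay between the almost-real region and the zero locus of Proposition \ref{prp:zeroes}: the argument only closes because the almost-real fundamental domain sits strictly inside that zero locus, keeping the denominator uniformly positive.
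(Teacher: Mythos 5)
Your overall route is the one the paper intends --- the paper's own proof of Lemma \ref{lem:decomposition} is the single line ``Immediate from Theorem \ref{thm:arano} and Corollary \ref{cor:positive}'' --- and most of what you supply is correct and genuinely needed to make that line honest: the twisting identity $\varphi^{\nu+\chi}_q=\chi\cdot\varphi^\nu_q$ (your computation via $w\xi-\xi\in Q^\vee$ and $(Q^\vee,P)\subseteq\mathbb{Z}$ is right), the definition of the classes through almost-real representatives supplied by Theorem \ref{thm:arano}(iii), the transfer of positive-definiteness through Theorem \ref{thm:arano}(ii) for the first bullet, and pointwise positivity of the denominator via Corollary \ref{cor:positive}.

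The gap is exactly at the point you yourself flag as delicate, and it is real: your compactness argument needs the \emph{closed} almost-real region to be disjoint from the zero locus of Proposition \ref{prp:zeroes}, and you assert this (``these points lie strictly outside the closed almost-real region'') instead of checking it against the stated bound. With the bound $2\pi|\log q|^{-1}$ printed in the paper (the definition as literally written is even vacuous, since $\log(q)^{-1}<0$, so one has to guess the normalization), the separation fails: the nearest zeros, at $(\mathrm{Re}\,\nu,\alpha)=0$ and $|(\mathrm{Im}\,\nu,\alpha)|=2\pi|\log q|^{-1}$, lie \emph{on} the boundary of the closed region and are limits of unitary principal series parameters from inside it. Concretely, for $N=2$ take $\nu=it\alpha$ with $t>0$; the formula in the proof of Proposition \ref{prp:zeroes} gives $\varphi^\nu_1(q^{2\rho})=\frac{q^{it}-q^{-it}}{(q-q^{-1})\,it}=\frac{2\sin(t|\log q|)}{(q^{-1}-q)\,t}$, which tends to $0$ as $(\mathrm{Im}\,\nu,\alpha)=2t\uparrow 2\pi|\log q|^{-1}$, while every such $\nu$ is purely imaginary, hence lies in $Z_1$ and (by Theorem \ref{thm:arano}(ii)) in $Z_q$, and is strictly inside the printed region. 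So under your assignment rule these $\nu$ land in the trivial class and $1/\varphi^{\nu}_1(q^{2\rho})$ is unbounded there: the second bullet fails. (Note also that with the wide region the almost-real representative is not unique --- these same $\nu$ also have an almost-real representative in the nontrivial class --- so your classes are not even well defined without a further choice.) The lemma survives only because ``almost real'' must be read as the half-width region $|(\mathrm{Im}\,\nu,\alpha)|\le\pi|\log q|^{-1}$ of \cite{arano2}, which is also what makes Theorem \ref{thm:arano}(iii) a sharp, type-A-specific statement, and whose closure genuinely avoids the zero locus; equivalently, with the wider region one must reassign near-boundary principal-series parameters to the neighbouring class (for $N=2$: cut the principal series at half height). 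Either way, the uniform lower bound on the denominator has to be arranged by such a choice, not asserted; as written, your argument does not close.
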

\begin{proof}
  Immediate from Theorem \ref{thm:arano} and Corollary \ref{cor:positive}.
\end{proof}
We can now give the proof of Theorem \ref{thm:cssuqn}.
\begin{proof}[Proof of Theorem \ref{thm:cssuqn}]
Take the decomposition as in Lemma \ref{lem:decomposition}. Put
\[
  C = \sup\left\{\frac{1}{\varphi^1_{\nu-\chi}(q^{2\rho})} \,\Bigg\vert\, \nu \in Z_q^\chi, \chi \in \widehat{P/Q}\right\}.
\]
We know that $\varphi$ is positive-definite if and only if there exists a finite measure $\mu$ on the unitary spherical dual such that
\[
  \varphi = \int \varphi^\nu_q d\mu(\nu).
\]
By putting $d\mu_\chi(\nu) = d\mu(\nu+\chi)|_{Z^\chi_q}$, we obtain a measure on $Z_1$. Defining $\varphi_{\chi}$ as
\[
  \varphi_{\chi} = \int \varphi^\nu_1\frac{d\mu_{\chi}(\nu)}{\varphi^\nu_1(q^{2\rho})},
\]
this gives the desired decomposition.
\end{proof}


\begin{thebibliography}{99}
  \bibitem{arano1}
  Y.~Arano, \emph{Unitary spherical representations of Drinfeld doubles}, J.~Reine Angew.~Math., published online (2016).
  
  \bibitem{arano2}
  Y.~Arano, \emph{Comparison of unitary duals of Drinfeld doubles and complex semisimple Lie groups}, to appear in Comm.~Math.~Phys., arXiv:1605.00866.

  \bibitem{badergelander}
  U.~Bader and T.~Gelander, \emph{Equicontinuous actions of semisimple groups}, preprint (2014), arXiv:1408.4217.

  \bibitem{burgermozes}
  M.~Burger and S.~Mozes, \emph{Groups acting on trees: from local to global structure}, Publ.~Math.~Inst.~Hautes \'Etudes Sci.~\textbf{92} (2000), 113--150.
  
  \bibitem{ciobotaru}
  C.~Ciobotaru, \emph{A unified proof of the Howe-Moore property}, J.~Lie Theory \textbf{25} (2015), 65--89.
  
  \bibitem{dawsskalskiviselter}
  M.~Daws, A.~Skalski and A.~Viselter, \emph{Around Property (T) for quantum groups}, to appear in Comm.~Math.~Phys., arXiv:1605.02800.
  
  \bibitem{decommerfreslonyamashita}
  K.~De Commer, A.~Freslon and M.~Yamashita, \emph{CCAP for universal discrete quantum groups}, Comm.~Math.~Phys.~\textbf{331} (2014), 677--701.
    
  \bibitem{ellisnerurkar}
  R.~Ellis and M.~Nerurkar, \emph{Weakly almost periodic flows}, Trans.~Amer.~Math.~Soc.~\textbf{313} (1989), 103--119.
  
  \bibitem{ghoshjones}
  S.K.~Ghosh and C.~Jones, \emph{Annular representation theory for rigid $C^{\ast}$-tensor categories}, J.~Funct.~Anal.~\textbf{270} (2016), 1537--1584.
  
  \bibitem{hayashiyamagami}
  T.~Hayashi and S.~Yamagami, \emph{Amenable tensor categories and their realizations as AFD bimodules}, J.~Funct.~Anal.~\textbf{172} (2000), 19--75. 

  \bibitem{helgason}
  S.~Helgason, \emph{Groups and Geometric Analysis: Integral Geometry, Invariant Differential Operators, and Spherical Functions}, Academic Press, Inc., Orlando, 1984.
  
  \bibitem{howemoore}
  R.E.~Howe and C.C.~Moore, \emph{Asymptotic properties of unitary representations}, J.~Funct.~Anal.~\textbf{32} (1979), 72--96.

  \bibitem{jones1}
  V.F.R.~Jones, \emph{Index for subfactors}, Invent.~Math.~\textbf{72} (1983), 1--25.

  \bibitem{jones2}
  V.F.R.~Jones, \emph{Planar algebras, I}, arXiv:math/9909027.

  \bibitem{jonesmorrisonsnyder}
  V.F.R.~Jones, S.~Morrison and N.~Snyder, \emph{The classification of subfactors of index at most 5}, Bull.~Amer.~Math.~Soc.~\textbf{51} (2014), 277-327.
  
  \bibitem{jordans}
  B.P.A.~Jordans, \emph{A classification of $\mathrm{SU}(d)$-type $C^{\ast}$-tensor categories}, Internat.~J.~ Math.~\textbf{25} (2014), 14050081.

  \bibitem{kac}
  V.~Kac, \emph{Infinite-dimensional Lie Algebras}, third edition, Cambridge University Press, Cambridge, 1990.
  
  \bibitem{kazhdanwenzl}
  D.A.~Kazhdan and H.~Wenzl, \emph{Reconstructing monoidal categories}, I.M.~Gel'fand Seminar, Adv. Soviet Math., Vol.~16, Part 2 (1993), 111--136.

  \bibitem{klimykschmuedgen}
  A.~Klimyk and K.~Schm\"udgen, \emph{Quantum Groups and their Representations}, Springer--Verlag, Berlin, 1997.

  \bibitem{kustermansvaes}
  J.~Kustermans and S.~Vaes, \emph{Locally compact quantum groups}, Ann.~Sci.~\'Ec.~Norm.~Sup\'er.~\textbf{33} (2000), 837--934.
  
  \bibitem{lubotzkymozes}
  A.~Lubotzky and S.~Mozes, \emph{Asymptotic properties of unitary representations of tree automorphisms}, Harmonic analysis and discrete potential theory (Frascati, 1991), pp.~289--298, Plenum, New York, 1992.

  \bibitem{masuda}
  T.~Masuda, \emph{Classification of Roberts actions of strongly amenable $C^{\ast}$-tensor categories on the injective factor of type III$_1$}, preprint (2016), arXiv:1611.00476.
  
  \bibitem{neshveyevtuset}
  S.~Neshveyev and L.~Tuset, \emph{Compact Quantum Groups and their Representation Categories}, Soci\'et\'e Math\'ematique de France, Paris, 2013.
  
  \bibitem{neshveyevyamashita}
  S.~Neshveyev and M.~Yamashita, \emph{Drinfeld center and representation theory for monoidal categories}, to appear in Comm.~Math.~Phys., arXiv:1501.07390.

  \bibitem{ocneanu}
  A.~Ocneanu, \emph{Chirality for operator algebras}, Subfactors (Kyuzeso, 1993), World Sci.~Publ., River Edge, 1994, pp.~39--63.
  
  \bibitem{popa1}
  S.~Popa, \emph{Classification  of  amenable  subfactor  of  type  II}, Acta Math.~\textbf{172} (1994), 163--255.

  \bibitem{popa2}
  S.~Popa, \emph{An axiomatization of the lattice of higher relative commutants of a subfactor}, Invent.~Math.~\textbf{120} (1995), 347--389.

  \bibitem{popavaes}
  S.~Popa and S.~Vaes, \emph{Representation theory for subfactors, $\lambda$-lattices and $C^{\ast}$-tensor categories}, Comm.~Math.~Phys.~\textbf{340} (2015), 1239--1280.
  
  \bibitem{veech}
  W.A.~Veech, \emph{Weakly almost periodic functions on semisimple Lie groups}, Monatsh.~Math.~\textbf{88} (1979), 55--68.
  
  \bibitem{woronowicz}
  S.L.~Woronowicz, \emph{Twisted $\mathrm{SU}(2)$ group. An example of a noncommutative differential calculus.}, Publ.~Res.~Inst.~Math.~Sci.~\textbf{23} (1987), 117--181.
  
  \bibitem{zimmer}
  R.J.~Zimmer, \emph{Ergodic Theory and Semisimple Groups}, Birkh\"auser, Basel, 1984.
\end{thebibliography}
\end{document}